\tikzset{labelsize/.style={font=\scriptsize}}
\tikzset{string/.style={very thick}}
\tikzset{
  pto/.style={->,postaction={decorate},
    decoration={
        markings,
        mark=at position 0.5 with {\arrow{|}}}
  },
}
\tikzset{2cell/.style={-implies,double,double equal sign distance,shorten >=9pt, shorten <=10pt}}
\mathchardef\mhyphen="2D
\declaretheorem[style=plain,numberwithin=section,name=Theorem]{theorem}
\declaretheorem[style=plain,sibling=theorem,name=Lemma]{lemma}
\declaretheorem[style=plain,sibling=theorem,name=Proposition]{proposition}
\declaretheorem[style=plain,sibling=theorem,name=Corollary]{corollary}
\declaretheorem[style=definition,qed=$\blacksquare$,sibling=theorem,name=Definition]{definition}
\declaretheorem[style=definition,qed=$\blacksquare$,sibling=theorem,name=Example]{example}
\declaretheorem[style=definition,qed=$\blacksquare$,sibling=theorem,name=Remark]{remark}
\newcommand{\pto}{}
\newcommand{\pgets}{}
\DeclareRobustCommand{\pto}{\mathrel{\mathpalette\p@to@gets\to}}
\DeclareRobustCommand{\pgets}{\mathrel{\mathpalette\p@to@gets\gets}}
\newcommand{\p@to@gets}[2]{%
  \ooalign{\hidewidth$\m@th#1\mapstochar\mkern5mu$\hidewidth\cr$\m@th#1\to$\cr}%
}
\newcommand{\Pow}{\mathcal{P}}
\newcommand{\FinSet}{\mathbf{FinSet}}
\newcommand{\Set}{\mathbf{Set}}
\newcommand{\id}{\mathrm{id}}
\newcommand{\defemph}[1]{\textbf{#1}}
\newcommand{\Pol}{\mathrm{Pol}}
\newcommand{\Rbar}{\overline{\mathbb{R}}}
\newcommand{\ob}{\mathrm{ob}}
\newcommand{\cat}[1]{\mathcal{#1}}
\newcommand{\Sol}{\mathcal{S}}
\newcommand{\Opt}{\mathcal{O}}
\newcommand{\TVCSP}{\mathrm{TVCSP}}
\newcommand{\CSP}{\mathrm{CSP}}
\DeclareMathOperator{\dom}{dom}
\crefname{theorem}{Theorem}{Theorems}
\crefname{proposition}{Proposition}{Propositions}
\crefname{lemma}{Lemma}{Lemmas}
\crefname{exmp}{Example}{Examples}
\crefname{corollary}{Corollary}{Corollarys}
\crefname{claim}{Claim}{Claims}
\crefname{remark}{Remark}{Remarks}
\crefname{section}{Section}{Sections}
\crefname{definition}{Definition}{Definitions}
\crefname{example}{Example}{Examples}
\crefname{table}{Table}{Tables}
\crefname{appendix}{Appendix}{Appendices}
\title{
Quantaloidal Approach to Constraint Satisfaction\footnote{Most of the work was done while Kei Kimura was at Saitama University.
}}
\author{Soichiro Fujii\thanks{Supported by JST, ERATO Grant Number JPMJER1603 (HASUO Metamathematics for Systems Design Project).}
\institute{Research Institute for Mathematical Sciences\\
Kyoto University\\
Kyoto, Japan}
\email{s.fujii.math@gmail.com}
\and
Yuni Iwamasa\thanks{Supported by JSPS KAKENHI Grant Numbers 20K23323, 20H05795.}
\institute{Department of Communications and Computer Engineering\\
Graduate School of Informatics\\
Kyoto University\\
Kyoto, Japan}
\email{iwamasa@i.kyoto-u.ac.jp}
\and
Kei Kimura\thanks{Supported by JST, ACT-X Grant Number JPMJAX200C, Japan, and JSPS KAKENHI Grant Numbers JP19K22841, JP21K17700.}
\institute{Faculty of Information Science and Electrical Engineering\\
Kyushu University\\
Fukuoka, Japan}
\email{kkimura@inf.kyushu-u.ac.jp}
}
\begin{document}
\maketitle

\begin{abstract}
The constraint satisfaction problem (CSP) is a computational problem that includes a range of important problems in computer science.
We point out that fundamental concepts of the CSP, such as the solution set of an instance and polymorphisms, can be formulated abstractly inside the 2-category $\mathcal{P}\mathbf{FinSet}$ of finite sets and sets of functions between them. 
The 2-category $\mathcal{P}\mathbf{FinSet}$ is a quantaloid, and the formulation relies mainly on structure available in any quantaloid. This observation suggests a formal development of generalisations of the CSP and concomitant notions of polymorphism in a large class of quantaloids. 
We extract a class of optimisation problems as a special case, and show that their computational complexity can be classified by the associated notion of polymorphism.
\end{abstract}

\section{Introduction}
\label{sec:introduction}
\subsection{Background}
\label{subsec:background}
The \emph{constraint satisfaction problem} (CSP) is a computational problem of determining whether it is possible to assign values to variables while satisfying all given constraints.
The CSP provides a general framework capturing a variety of problems in diverse fields such as artificial intelligence (see, e.g., \cite{Tsa93,Dec03,RvW06}), theoretical computer science (e.g., \cite{CKS01}) and operations research (e.g., \cite{HOv18}), and has been studied from both practical and theoretical points of view.
Many heuristic algorithms have been developed and incorporated into CSP solvers, and these solvers are used 
for various purposes including corporate decision making (e.g., \cite{RvW06,BHZ06,LRSV18}).
Generalisations of the CSP are also widely studied; these include optimisation problems (e.g., \cite{Ziv12}) and counting problems (e.g., \cite{CaC17}).

Formally, a \defemph{CSP instance} $I=(V,D,\cat{C})$ is given by a finite set $V$ of \defemph{variables}, a finite set $D$ called the \defemph{domain}, and a finite set $\cat{C}$ of constraints. Here, each \defemph{constraint} is a triple $(k,\mathbf{x},\rho)$ consisting of a natural number $k$ called the \defemph{arity}, a $k$-tuple $\mathbf{x}\in V^k$ of variables called the \defemph{constraint scope}, and a $k$-ary relation $\rho\subseteq D^k$ on $D$ called the \defemph{constraint relation} of the constraint.
A \defemph{solution} of $I$ is a function $s\colon V\to D$ satisfying all constraints, i.e., such that for each $(k,\mathbf{x},\rho)\in\cat{C}$ with $\mathbf{x}=(x_1,\dots,x_k)$, we have $s(\mathbf{x})=(s(x_1),\dots,s(x_k))\in\rho$.
The set of all solutions of $I$ is denoted by $\Sol(I)$, and is called the \defemph{solution set} of $I$.
To solve the CSP instance $I$ is to output ``yes'' if there exists a solution of $I$, 
and ``no'' otherwise.

The following are two typical problems that can be modelled as CSPs.
\begin{example}
The Boolean satisfiability  problem (SAT) is the problem of determining whether a given conjunctive normal form (CNF) propositional formula is satisfiable or not.
Here, a \emph{CNF formula} is a conjunction of clauses, a \emph{clause} being a disjunction of literals, and a \emph{literal} being a propositional variable or its negation. For example, 
$(x_1 \vee \overline{x}_2 \vee x_4) \wedge (\overline{x}_1 \vee \overline{x}_2 \vee x_2 \vee x_{4}) \wedge (x_2 \vee x_4)$ 
is a CNF formula.
A CNF formula $\varphi(x_1,\dots,x_n)$ can be thought of as a CSP instance $I_\varphi$ with $V=\{x_1,\dots,x_n\}$ and $D=\{0,1\}$; each clause $\psi(x_{i_1},\dots,x_{i_k})$ of $\varphi$ gives rise to a constraint of $I_\varphi$ expressing the condition for a truth value assignment to $\{x_{i_1},\dots,x_{i_k}\}$ to make $\psi$ true. 
For example, the clause $(x_1\vee \overline{x}_2\vee x_4)$ corresponds to the constraint $(3, (x_1,x_2,x_4), \{0,1\}^3\setminus\{(0,1,0)\})$.

A well-known subclass of SAT is 3-SAT, in which the input is restricted to a \emph{3-CNF formula}, i.e., a CNF formula such that every clause is a disjunction of three literals. SAT and 3-SAT are fundamental in computer science; for example they are among the first problems shown to be NP-complete~\cite{Coo71,Lev73}. 
\end{example}
\begin{example}
\label{ex:graph-colouring}
Let $k$ be a positive integer.
In the graph $k$-colouring problem, we are given a simple undirected graph, i.e., a pair consisting of a finite set $V$ of vertices and a symmetric and irreflexive binary relation $E$ on $V$ representing the adjacency relation.
Our task is to determine whether it is possible to assign $k$ colours to the vertices so that adjacent vertices are assigned different colours.
The graph $k$-colouring problem is intensively studied in combinatorics (e.g., \cite{HeN04,BoM08}).
It is known that the graph $k$-colouring problem is in P (solvable in polynomial time) if $k \le 2$, and is NP-complete if $k \ge 3$.
To formulate the graph $k$-colouring problem as a CSP, let $D_k$ be the $k$-element set of colours and ${\neq_k} = \{ (d_1,d_2) \in D_{k}^2 \mid d_1 \neq d_2  \}$.
An instance $(V,E)$ of the graph $k$-colouring problem can be cast as the CSP instance $(V,D_k,\cat{C})$, where $\cat{C} = \{\, (2,(v_1,v_2),\neq_k) \mid (v_1,v_2) \in E \,\}$.
\end{example}

A notable theoretical result in this field is
the \emph{dichotomy theorem} for CSPs~\cite{Sch78,Bul17,Zhu20}.
To state the theorem, we need some definitions.
A \defemph{constraint language}
is a pair $(D,\cat{D})$
of a finite set $D$
and a finite family $\cat{D} = (\cat{D}_k)_{k \in\mathbb{N}}\in \prod_{k\in\mathbb{N}}\Pow(\Pow(D^k))$
of relations on $D$ (i.e., $\cat{D}_k$ is empty for all but finitely many $k$).
Each constraint language $(D,\cat{D})$ determines the class $\CSP(\cat{D})$ consisting of all CSP instances $(V,D',\cat{C})$
such that
$D'=D$ and,
for each constraint $(k,\mathbf{x},\rho)\in \cat{C}$,
we have 
$\rho\in\cat{D}_k$.
For example, $\CSP(\cat{D})$ reduces to 3-SAT when $(D,\cat{D})=\left(\{0,1\}, \big\{\,\{0,1\}^3\setminus \{(d_1,d_2,d_3)\}\mid d_1,d_2,d_3\in\{0,1\}\,\big\}\right)$, and to the graph $k$-colouring problem (for possibly directed graphs with loops) when $(D,\cat{D})=(D_k,\{\neq_k\})$. 
Roughly, the dichotomy theorem states that $\CSP(\cat{D})$ is in P if $\cat{D}$ satisfies a certain property, and is NP-complete otherwise.
Notice that this \emph{dichotomy} result is highly nontrivial, given the fact that under the assumption P $\neq$ NP, there exists an infinite hierarchy of complexity classes (up to a polynomial time reduction) containing P and contained in NP~\cite{Ladner}.
An interesting aspect of the dichotomy theorem is the fact that the border between P and NP-completeness can be captured by a purely algebraic criterion based on the notion of \emph{polymorphism}, to which we now turn.

In the long chain of research devoted to the analysis of computational complexity of $\CSP(\cat{D})$ (e.g., \cite{Sch78,JCG97,Jeavons-algebraic,Feder-Vardi,BJK05,Bul06,Bul17,Zhu20}), special attention has been paid to the \emph{symmetry} of problems. The idea is that a problem should be easy to solve if it admits enough symmetry. It is clear (at least intuitively) that if a CSP instance has certain symmetry, then so does its solution set. For example, the graph $k$-colouring problem is invariant under an arbitrary permutation of colours, and thus it follows that so is the solution set of each of its instances. 
Although the symmetry of a mathematical object is often captured by its group of automorphisms, this is not sufficient for the analysis of CSPs; for example, while the graph $k$-colouring problem admits the maximum automorphism group, it is NP-complete if $k\geq 3$.
It turns out that we need to enlarge the \emph{group of automorphisms} to the \emph{clone of polymorphisms}; here, a polymorphism of a mathematical object $X$ refers to a homomorphism from its finite power $X^n$ to $X$.\footnote{Our usage of the term ``polymorphism'' follows a tradition in universal algebra (see, e.g., \cite{Poschel-general}). In particular, it has nothing to do with \emph{polymorphism} in type theory and programming language theory, nor with \emph{poly-morphism} in the sense of \cite{Mochizuki}, which incidentally refers to a morphism in the free quantaloid $\Pow\cat{A}$ over a category $\cat{A}$ (see \cref{ex:PC}).}

The adequacy of polymorphisms in the current context is well-attested by their crucial use in a precise statement of the dichotomy theorem (\cref{thm:dichotomy-CSP}): given a constraint language $(D,\cat{D})$, $\CSP(\cat{D})$ is in P if the relational structure $(D,(\rho)_{k\in\mathbb{N},\rho\in\cat{D}_k})$ admits a \emph{Siggers operation} (see \cref{def:Siggers}) as a polymorphism, and is NP-complete otherwise.

\subsection{Our results}
\label{subsec:our-results}
In this paper, we shed a new light on the CSP and its variants by formulating their fundamental concepts in suitable \emph{quantaloids}. A quantaloid is a particularly well-behaved 2-category, in which right extensions and right liftings (right adjoints of precomposition and postcomposition by a morphism) always exist.

First we capture the ordinary CSP in the quantaloid $\Pow\FinSet$ whose objects are finite sets, whose morphisms $A\pto B$ are \emph{sets} of functions $A\to B$, and whose 2-cells are given by the inclusion relation. 
Observe that a $k$-ary relation $\rho$ on a finite set $D$ can be seen as a morphism $\rho\colon [k]=\{1,\dots,k\}\pto D$ in $\Pow\FinSet$. Thus each constraint $(k,\mathbf{x},\rho)\in\cat{C}$ of a CSP instance $I=(V,D,\cat{C})$ gives rise to the solid arrows in the diagram below, from which we obtain the right extension $\rho\swarrow \{\mathbf{x}\}$:
\[
\begin{tikzpicture}[baseline=-\the\dimexpr\fontdimen22\textfont2\relax ]
\node (L) at (0,0) {$[k]$};
\node (L1) at (1.5,1.5) {$V$};
\node (L2) at (3,0) {$D$.};

\draw [pto] (L) to node [auto,labelsize] {$\{\mathbf{x}\}$} (L1);
\draw [pto] (L) to node [auto,swap,labelsize] {$\rho$} (L2);
\draw [pto,dashed] (L1) to node [auto,labelsize] {$\rho\swarrow \{\mathbf{x}\}$} (L2);

\draw [2cell] (1.5,1.3) to (1.5,0);
\end{tikzpicture}
\]
As we shall see, $\rho\swarrow \{\mathbf{x}\}\colon V\pto D$ is precisely the \emph{set of all functions $V\to D$ satisfying the constraint $(k,\mathbf{x},\rho)$}. Thus the solution set $\Sol(I)$ can be seen as the morphism $V\pto D$ in $\Pow\FinSet$ expressed as
$
\Sol(I)=\bigcap_{(k,\mathbf{x},\rho)\in\cat{C}} \rho\swarrow\{\mathbf{x}\}.
$

We also give a quantaloidal formulation of polymorphisms. An $n$-ary polymorphism of a $k$-ary relation $\rho$ on $D$ is a homomorphism of relational structures $(D,(\rho))^n\to (D,(\rho))$.
The set $\Pol(\rho)_n$ of all $n$-ary polymorphisms of $\rho$ can be seen as a morphism $D^n\pto D$ in $\Pow\FinSet$, and is expressed as $\Pol(\rho)_n=\rho\swarrow (\{\pi_i\}_{i=1}^n\searrow \rho)$, where $\searrow$ denotes right lifting and the morphism $\{\pi_i\}_{i=1}^n\colon D^n\pto D$ is the set of projections. This provides a novel view to the set of polymorphisms of $\rho$ as the ``double dualisation'' with respect to $\rho$ of the set of projections.

This quantaloidal reformulation of the ordinary CSP opens the way to a formal definition of the \emph{quantaloidal CSP} and the associated notion of polymorphism in an abstract setting. We shall sketch such a definition in an arbitrary quantaloid of the form $\cat{QA}$ (see \cref{ex:QA}) generated by a quantale (one-object quantaloid) $\cat{Q}$ and a locally small category $\cat{A}$ with finite products. In particular, we show in this generality the claim that the solution set $\Sol(I)$ inherits the symmetry of an instance $I$, formulated suitably in terms of \emph{$\cat{Q}$-valued polymorphisms} (\cref{prop:Pol-closure-properties-QA}).

We then instantiate this general framework by setting $\cat{QA}=\Rbar\FinSet$ or $\Rbar \Set$, where $\Rbar$ is a quantale of extended real numbers. In these cases, the quantaloidal CSP contains a certain class of optimisation problems which we call the \emph{tropical valued CSP} (TVCSP). The TVCSP is different from the more widely studied optimisation variant of the CSP called the \emph{valued CSP} (VCSP), but is able to formulate certain scheduling problems as well as (via its infinitary variant) important concepts in continuous optimisation, such as quasiconvex functions and piecewise-linear convex functions.
We shall establish a dichotomy theorem for TVCSPs (satisfying suitable finiteness conditions) by reducing it to the dichotomy for CSPs. 
The border between P and NP-hardness for TVCSPs can be captured by the notion of $\Rbar$-valued polymorphism, which is a special case of our general notion of $\cat{Q}$-valued polymorphism.

\paragraph{Related work.}
Interaction between category theory and the field of algorithms and computational complexity is rare, and to the best of our knowledge this is the first paper relating the CSP and 2-category theory. We cite \cite{KOWZ20} as a recent paper applying categorical ideas to a generalisation of the CSP called the \emph{promise CSP}, although its approach and goal are entirely different from ours. 
A generalisation of the CSP valued in a certain class of idempotent semirings has been introduced in~\cite{BMR97,BMRSVF99}.
Whereas quantales are an infinitary version of idempotent semirings, our definition of $\Sol(I)$ in the quantaloidal CSP is incomparable with the corresponding notion (called \emph{consistency level}) in their framework.
{In a more recent work \cite{HMV17}, polymorphisms in the context of the above semiring-based generalised CSP are considered. 
Over a fixed constraint language, their notion of instance can capture a wider class of problems than ours, but as a consequence, their computational complexity results rely on extra assumptions; as the valuation structure (corresponding to our $\cat{Q}$), they adopt totally ordered commutative monoids whose unit element is the largest element and satisfying certain finiteness conditions.
We have not been able to find any clear relationship between their notion of polymorphism and ours.}

\paragraph{Outline.}
The remainder of this paper is organised as follows.
In \cref{sec:quantaloids} we recall the notion of quantaloid. In \cref{sec:polymorphism-quantaloid} we give a quantaloidal formulation of the CSP, and then proceed in \cref{sec:CSP-in-QA} to its generalisation in quantaloids of the form $\cat{QA}$. 
\cref{sec:polymorphism-tropical} is devoted to the TVCSP; we introduce it as a special case of the quantaloidal CSP, analyse its computational complexity and explore examples. 

\paragraph{Acknowledgements.} 
We thank Stanislav \v{Z}ivn\'{y} for providing a comment on an early draft of this paper and calling our attention to~\cite{HMV17},
and Takehide Soh for bibliographical information on CSP solvers~\cite{RvW06,BHZ06,LRSV18}.

\section{Quantaloids}
\label{sec:quantaloids}
In this section we review the definition and basic structure of quantaloids. See, e.g., \cite{Rosenthal-quantaloid,Stubbe-quantaloid-dist} for more information on quantaloids.
\begin{definition}
A \defemph{quantaloid} is a locally small category $\cat{K}$ equipped with a partial order $\leq_{A,B}$ on each hom-set $\cat{K}(A,B)$ such that 
\begin{itemize}
    \item for each $A,B\in\cat{K}$, $(\cat{K}(A,B),\leq_{A,B})$ is a complete lattice;
    \item for each $A,B,C\in\cat{K}$, the composition law $\cat{K}(B,C)\times\cat{K}(A,B)\to\cat{K}(A,C)$ preserves arbitrary suprema in each variable: for each set $J$ and $\varphi\colon A\to B$, $(\varphi_j\colon A\to B)_{j\in J}$, $\psi\colon B\to C$  and $(\psi_j\colon B\to C)_{j\in J}$ in $\cat{K}$, we have 
    \[
    \big(\bigvee_{j\in J}\psi_j\big)\circ \varphi=\bigvee_{j\in J}(\psi_j\circ \varphi)\quad\textrm{and}\quad    \psi\circ \big(\bigvee_{j\in J}\varphi_j\big)=\bigvee_{j\in J}(\psi\circ \varphi_j).
    \]
\end{itemize}

A quantaloid whose set of objects is a singleton is called a \defemph{quantale}. Explicitly, a quantale is a tuple $\cat{Q}=(Q,\leq,e,\otimes)$ such that $(Q,\leq)$ is a complete lattice, $(Q,e,\otimes)$ is a monoid, and the multiplication $\otimes\colon Q\times Q\to Q$ preserves arbitrary suprema in each variable.
\end{definition}

Note that a quantaloid $\cat{K}$ can be regarded as a 2-category, whose 2-cells are given by the partial order relation $\leq_{A,B}$ on each hom-set (that is, there exists a necessarily unique 2-cell $\varphi\Rightarrow \varphi'$ between a parallel pair of morphisms $\varphi,\varphi'\colon A\to B$ precisely when $\varphi\leq_{A,B}\varphi'$).
Each quantaloid is a \emph{biclosed} 2-category, meaning that right (Kan) extensions and right liftings always exist:

\begin{proposition}
Let $\cat{K}$ be a quantaloid. For each $A,B,C\in\cat{K}$, $\varphi\colon A\to B$ and $\psi\colon B\to C$ in $\cat{K}$, both 
\begin{equation}
\label{eqn:quantaloid-composition}
(-)\circ \varphi\colon \cat{K}(B,C)\to \cat{K}(A,C) \quad\textrm{and}\quad \psi\circ (-)\colon \cat{K}(A,B)\to\cat{K}(A,C)
\end{equation}
have right adjoints.
\end{proposition}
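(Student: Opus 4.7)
The plan is to derive this directly from the adjoint functor theorem for complete lattices: a monotone map $f \colon L \to L'$ between complete lattices has a right adjoint precisely when it preserves arbitrary suprema, and in that case the right adjoint is given pointwise by $g(x') = \bigvee \{\, x \in L \mid f(x) \leq_{L'} x' \,\}$.

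First I would observe that, by the first clause in the definition of quantaloid, each of the hom-sets $\cat{K}(B,C)$, $\cat{K}(A,B)$, and $\cat{K}(A,C)$ is a complete lattice under the relevant partial order. Next, the two maps displayed in \eqref{eqn:quantaloid-composition} preserve arbitrary suprema by the second clause of the definition, applied with one argument held fixed. In particular, taking $J = \emptyset$ shows that both maps preserve the bottom element, and taking $J$ to be two-element shows that they preserve binary joins, so in particular they are monotone; more generally, sup-preservation in each variable is exactly the hypothesis needed for the adjoint functor theorem.

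Invoking this theorem therefore produces, for each $\varphi \colon A \to B$, a right adjoint to $(-) \circ \varphi$ given explicitly by $\chi \mapsto \bigvee \{\, \psi \colon B \to C \mid \psi \circ \varphi \leq_{A,C} \chi \,\}$; this is the right extension, typically written $\chi \swarrow \varphi$. Symmetrically, a right adjoint to $\psi \circ (-)$ is given by $\chi \mapsto \bigvee \{\, \varphi \colon A \to B \mid \psi \circ \varphi \leq_{A,C} \chi \,\}$, namely the right lifting $\psi \searrow \chi$. There is no real obstacle here: the proposition is essentially a restatement of the second quantaloid axiom in the language of adjunctions, with the only additional content being the invocation of a standard fact about monotone maps between complete lattices.
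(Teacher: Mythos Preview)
Your argument is correct: the two maps are sup-preserving by the second quantaloid axiom, and the adjoint functor theorem for complete lattices then furnishes the right adjoints, with the explicit formulas you wrote down for $\swarrow$ and $\searrow$. The paper itself does not give a proof of this proposition---it states it and immediately proceeds to name the right adjoints---so there is nothing to compare against; your write-up simply supplies the standard justification that the paper leaves implicit.
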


The right adjoints of \eqref{eqn:quantaloid-composition} are denoted by 
\[
(-)\swarrow \varphi\colon \cat{K}(A,C)\to \cat{K}(B,C) \quad\textrm{and}\quad \psi\searrow (-)\colon \cat{K}(A,C)\to\cat{K}(A,B)
\]
respectively. If $\theta\colon A\to C$ is a morphism in $\cat{K}$, then the morphism $\theta\swarrow \varphi\colon B\to C$ is called the \defemph{right extension} of $\theta$ along $\varphi$, and $\psi\searrow \theta\colon A\to B$ the \defemph{right lifting} of $\theta$ along $\psi$.
By the adjointness we have 
\begin{equation}
\label{eqn:adjointness-quantaloid}
\psi\leq_{B,C} \theta\swarrow \varphi \iff 
\psi\circ \varphi\leq_{A,C} \theta\iff 
\varphi\leq_{A,B} \psi\searrow \theta. 
\end{equation}
The following are formal properties of these operations in a quantaloid which we shall use later.
\begin{proposition}
\label{prop:quantaloid-basic-equations}
Let $\cat{K}$ be a quantaloid.
\begin{enumerate}
    \item
    For each set $J$ and $\varphi\colon A\to B$, $(\varphi_j\colon A\to B)_{j\in J}$, $\psi\colon B\to C$, $(\psi_j\colon B\to C)_{j\in J}$, $\theta\colon A\to C$ and $(\theta_j\colon A\to C)_{j\in J}$ in $\cat{K}$, we have  
    \begin{align*}
    \big(\bigwedge_{j\in J}\theta_j\big)\swarrow \varphi &= \bigwedge_{j\in J} (\theta_j\swarrow \varphi),& \theta\swarrow\big(\bigvee_{j\in J}\varphi_j\big)  &= \bigwedge_{j\in J} (\theta\swarrow \varphi_j),\\
    \psi\searrow \big(\bigwedge_{j\in J}\theta_j\big)&=\bigwedge_{j\in J}(\psi\searrow \theta_j),& \big(\bigvee_{j\in J}\psi_j\big)\searrow \theta&=\bigwedge_{j\in J}(\psi_j\searrow \theta).
    \end{align*}
    \item For each $\varphi\colon A\to B$, $\psi\colon B\to C$, $\theta\colon C\to D$ and $\gamma\colon A\to D$ in $\cat{K}$, we have 
    \[
    \gamma\swarrow (\psi\circ \varphi)=(\gamma\swarrow \varphi)\swarrow \psi,\qquad
    (\theta\circ \psi)\searrow \gamma=\psi\searrow (\theta\searrow \gamma),\qquad
    \theta\searrow (\gamma\swarrow \varphi)=(\theta\searrow \gamma)\swarrow \varphi.
    \]
\end{enumerate}
\end{proposition}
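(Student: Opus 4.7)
The plan is to derive every identity from the triple adjointness displayed in~\eqref{eqn:adjointness-quantaloid}, together with the fact that, in a poset, two elements coincide iff they have the same set of elements below them (``Yoneda in a poset''). In other words, to show $\alpha=\beta$ in $\cat{K}(X,Y)$ it is enough to verify that $\chi\leq\alpha\iff\chi\leq\beta$ for every $\chi\colon X\to Y$; each of the seven equations then reduces to a short chain of elementary equivalences.

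For part 1, I would handle the two right-extension statements first and then obtain the right-lifting statements by the symmetric argument. For the meet-in-the-numerator identity, for any $\psi\colon B\to C$ we have $\psi\leq(\bigwedge_{j}\theta_j)\swarrow\varphi$ iff $\psi\circ\varphi\leq\bigwedge_j\theta_j$ iff $\psi\circ\varphi\leq\theta_j$ for all $j$ iff $\psi\leq\theta_j\swarrow\varphi$ for all $j$ iff $\psi\leq\bigwedge_j(\theta_j\swarrow\varphi)$. For the join-in-the-denominator identity, the same calculation goes through after first invoking the quantaloid axiom that composition preserves suprema in each variable: $\psi\circ(\bigvee_j\varphi_j)=\bigvee_j(\psi\circ\varphi_j)$, so that $\psi\circ(\bigvee_j\varphi_j)\leq\theta$ iff $\psi\circ\varphi_j\leq\theta$ for every $j$, and the adjointness finishes the argument.

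For part 2, the same template works, now run for three morphism variables. For instance, for the mixed identity $\theta\searrow(\gamma\swarrow\varphi)=(\theta\searrow\gamma)\swarrow\varphi$: given a morphism $\chi$ of appropriate type, we have $\chi\leq\theta\searrow(\gamma\swarrow\varphi)$ iff $\theta\circ\chi\leq\gamma\swarrow\varphi$ iff $\theta\circ\chi\circ\varphi\leq\gamma$ iff $\chi\circ\varphi\leq\theta\searrow\gamma$ iff $\chi\leq(\theta\searrow\gamma)\swarrow\varphi$, where each ``iff'' is an instance of~\eqref{eqn:adjointness-quantaloid} and the middle step uses associativity of composition. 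The two remaining identities $\gamma\swarrow(\psi\circ\varphi)=(\gamma\swarrow\varphi)\swarrow\psi$ and $(\theta\circ\psi)\searrow\gamma=\psi\searrow(\theta\searrow\gamma)$ are established by exactly the same one-line manipulation, simply reassociating the triple composite that appears after two unfoldings of~\eqref{eqn:adjointness-quantaloid}.

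Nothing here is genuinely difficult; the only potential obstacle is purely bookkeeping, namely keeping track of the domains and codomains of the intermediate morphisms $\chi$ so that each application of~\eqref{eqn:adjointness-quantaloid} is between legitimate composites. Since the argument is entirely formal and uses only the adjointness~\eqref{eqn:adjointness-quantaloid}, the supremum-preservation of composition, and associativity, the full proof is just seven instances of this mechanical template and can be written out uniformly.
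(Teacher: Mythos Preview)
Your proposal is correct and follows essentially the same approach as the paper: the paper states that all identities are straightforward consequences of the adjointness~\eqref{eqn:adjointness-quantaloid} and, as a representative example, writes out exactly the chain of equivalences you give for $\theta\searrow(\gamma\swarrow\varphi)=(\theta\searrow\gamma)\swarrow\varphi$. Your treatment is slightly more detailed in that you also spell out how part~1 goes, but the method is identical.
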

\begin{proof}
These are all straightforward consequences of the adjointness \eqref{eqn:adjointness-quantaloid}. As an example, we shall prove the last equation.  
For any $\psi'\colon B\to C$ in $\cat{K}$, we have 
\begin{align*}
\psi'\leq_{B,C}\theta\searrow (\gamma\swarrow \varphi) &\iff \theta\circ \psi'\leq_{B,D}\gamma\swarrow \varphi \\
&\iff (\theta\circ \psi')\circ \varphi\leq_{A,D}\gamma\\
&\iff \theta\circ (\psi'\circ \varphi)\leq_{A,D}\gamma \\
&\iff \psi'\circ \varphi\leq_{A,C}\theta\searrow \gamma \\
&\iff \psi'\leq_{B,C}(\theta\searrow \gamma)\swarrow \varphi. 
\end{align*}
Since $\psi'$ was  arbitrary, we must have $\theta\searrow (\gamma\swarrow \varphi)=(\theta\searrow \gamma)\swarrow \varphi$.
\end{proof}

We conclude this section with several examples of quantales and quantaloids.

\begin{example}
\label{ex:quantale-2}
The two-element quantale $\mathbf{2}=(\{0,1\},\leq, 1,\wedge)$ consists of the two-element chain (with $0\leq 1$) equipped with the monoid structure given by infima (or conjunction).
Since the multiplication $\wedge$ is commutative, right extensions and right liftings coincide, and are given by implication.
\end{example}

\begin{table}[t]
\centering
\begin{tabular}{c c|c c c}
  \multicolumn{2}{c|}{\multirow{2}{*}{$\beta+\alpha$}} &  & $\alpha$ & \\
 \multicolumn{2}{c|}{} & $\infty$  & $s$       & $-\infty$ \\ 
 \hline
    & $\infty$     & $\infty$  & $\infty$  & $\infty$ \\ 
 $\beta$&$t$           & $\infty$  &  $t+s$    & $-\infty$ \\
    &$-\infty$     & $\infty$ & $-\infty$ & $-\infty$ 
\end{tabular}
\qquad\qquad
\begin{tabular}{c c|c c c} 
 \multicolumn{2}{c|}{\multirow{2}{*}{$\gamma-\beta$}} &  & $\beta$ & \\
 \multicolumn{2}{c|}{} & $\infty$  & $t$       & $-\infty$ \\ 
 \hline
 &$\infty$      & $-\infty$  & $\infty$  & $\infty$ \\ 
$\gamma$ &$u$           & $-\infty$  &  $u-t$    & $\infty$ \\
 &$-\infty$     & $-\infty$ & $-\infty$ & $-\infty$ 
\end{tabular}
\caption{The operation tables for $\beta+\alpha$ and $\gamma-\beta\ (= \gamma\swarrow \beta = \beta\searrow \gamma)$ in $\Rbar$. The symbols $s,t$ and $u$ denote real numbers.}
\label{table:Rbar}
\end{table}

\begin{example}[\cite{Lawvere-state}]
\label{ex:quantale-R}
The quantale $\Rbar=(\mathbb{R}\cup\{\pm\infty\},\geq, 0, + )$ of extended real numbers consists of the totally ordered set of real numbers ordered by the \emph{opposite} $\geq$ of the usual order $\leq$, extended with the \emph{least} element $\infty$ and the \emph{greatest} element $-\infty$, and equipped with (an extension of) the usual addition. We have chosen the direction $\geq$ in order to match the standard setting of the VCSP, in which the goal is usually to \emph{minimise} a certain quantity rather than to \emph{maximise} it. To avoid confusion, we shall use the notations $\inf$ and $\sup$ to denote infima and suprema in $\mathbb{R}\cup\{\pm\infty\}$ with respect to the usual order $\leq$ (so that, e.g., $\inf\{3,5\}=3$). As a consequence, when we specialise certain formulas for general quantales to the quantale $\Rbar$, $\bigvee$ is translated to $\inf$ and $\bigwedge$ to $\sup$.
The requirement that $+$ should preserve arbitrary $\bigvee$ ($=$ $\inf$) in each variable determines its extension to $\Rbar\cup\{\pm\infty\}$ uniquely \cite{Lawvere-state}. The right extensions and right liftings coincide and are given by a suitable extension of subtraction; see \cref{table:Rbar}. 

Variants of $\Rbar$ may be obtained for example by restricting it to non-negative numbers \cite{Lawvere-metric} or to integers.
\end{example}

\begin{example}[\cite{Rosenthal-free}]
\label{ex:PC}
For any locally small category $\cat{A}$, the \defemph{free quantaloid} $\Pow\cat{A}$ on $\cat{A}$ has the same objects as $\cat{A}$, and for each $A,B\in\cat{A}$, $(\Pow\cat{A})(A,B)$ is the powerset $\Pow(\cat{A}(A,B))$ equipped with the inclusion order. 
An element $f\in\cat{A}(A,B)$ is written as $f\colon A\to B$ and $\varphi\in(\Pow\cat{A})(A,B)$ as $\varphi\colon A\pto B$; we shall adopt a similar convention throughout this paper.
The composition of $\varphi\colon A\pto B$ and $\psi\colon B\pto C$ is defined as 
$\psi\circ \varphi=\{\,g\circ f\mid f\in \varphi,\, g\in \psi\,\}$.
Any morphism $f\colon A\to B$ in $\cat{A}$ gives rise to a ``singleton'' morphism $\{f\}\colon A\pto B$ in $\Pow\cat{A}$.  The identity on $A\in\Pow\cat{A}$ is $\{\id_A\}$.
Given $\varphi\colon A\pto B$, $\psi\colon B\pto C$ and $\theta\colon A\pto C$ in $\Pow\cat{A}$, we have 
\begin{equation}
\label{eqn:right-ext-right-lift-PC}
\theta\swarrow \varphi=\{\,g\in\cat{A}(B,C)\mid\forall f\in \varphi,\, g\circ f\in \theta\,\}\quad\textrm{and}\quad 
\psi\searrow \theta=\{\,f\in\cat{A}(A,B)\mid\forall g\in \psi,\, g\circ f\in \theta\,\}.\qedhere
\end{equation}
\end{example}

\begin{example}
\label{ex:QA}
Given any quantale $\cat{Q}=(Q,\leq,e,\otimes)$ and any locally small category $\cat{A}$, we can define the quantaloid $\cat{QA}$ by setting $\ob(\cat{QA})=\ob(\cat{A})$ and $(\cat{QA})(A,B)=[\cat{A}(A,B),Q]$ (the set of all functions $\cat{A}(A,B)\to Q$) equipped with the pointwise order. The composition of $\varphi\colon A\pto B$ and $\psi\colon B\pto C$ maps each $h\in\cat{A}(A,C)$ to 
\[
(\psi\circ\varphi)(h)=\bigvee\{\,\psi(g)\otimes\varphi(f)\mid f\in \cat{A}(A,B),\, g\in\cat{A}(B,C),\, g\circ f=h\,\}.
\]
A morphism $f\colon A\to B$ in $\cat{A}$ gives rise to a ``singleton'' morphism $\{f\}\colon A\pto B$ which assigns $e$ to $f$ and the least element $\bot$ of $Q$ to all morphisms $f'\in\cat{A}(A,B)$ different from $f$. The identity on $A\in\cat{QA}$ is $\{\id_A\}$.
The following slight generalisation of singleton morphisms will be used later: for each $f\colon A\to B$ in $\cat{A}$ and $\alpha\in Q$, we define the morphism $\{f\}^\alpha\colon A\pto B$ by assigning $\alpha$ to $f$ and $\bot$ to all other morphisms in $\cat{A}(A,B)$.
Given $\varphi\colon A\pto B$, $\psi\colon B\pto C$ and $\theta\colon A\pto C$ in $\cat{QA}$, we have
\begin{align*}
(\theta\swarrow\varphi)(g)&=\bigwedge\{\,\theta(g\circ f)\swarrow\varphi(f)\mid f\in\cat{A}(A,B)\,\}\quad\text{and}\\
(\psi\searrow\theta)(f)&=\bigwedge\{\,\psi(g)\searrow\theta(g\circ f)\mid g\in\cat{A}(B,C)\,\}
\end{align*}
for each $g\in \cat{A}(B,C)$ and $f\in\cat{A}(A,B)$,
where $\swarrow$ and $\searrow$ inside the curly braces denote the right extension and right lifting in $\cat{Q}$, respectively. Observe that when $\cat{Q}=\mathbf{2}$, we recover \cref{ex:PC}. 
\end{example}

\begin{example}
\label{ex:RC}
As a special case of \cref{ex:QA} with  $\cat{Q}=\Rbar$, we obtain the quantaloid $\Rbar \cat{A}$ for any locally small category $\cat{A}$. A morphism $\varphi\colon A\pto B$ in $\Rbar\cat{A}$ is a function $\varphi\colon \cat{A}(A,B)\to \Rbar$. 
The composition of $\varphi\colon A\pto B$ and $\psi\colon B\pto C$ in $\Rbar \cat{A}$ maps each $h\in\cat{A}(A,C)$ to
\[
(\psi\circ\varphi)(h)=\inf\{\,\psi(g)+\varphi(f)\mid f\in \cat{A}(A,B),\, g\in \cat{A}(B,C),\, g\circ f=h\,\}.
\] 
Given $\varphi\colon A\pto B$, $\psi\colon B\pto C$ and $\theta\colon A\pto C$ in $\Rbar\cat{A}$, we have 
\begin{align*}
(\theta\swarrow\varphi)(g)&=\sup\{\,\theta(g\circ f)-\varphi(f)\mid f\in\cat{A}(A,B)\,\}\quad\text{and}\\
(\psi\searrow\theta)(f)&=\sup\{\,\theta(g\circ f)-\psi(g)\mid g\in\cat{A}(B,C)\,\}.\qedhere
\end{align*}
\end{example}

\section{CSPs and polymorphisms via $\Pow\FinSet$}
\label{sec:polymorphism-quantaloid}
As mentioned in the Introduction, a CSP instance $I=(V,D,\cat{C})$ and its solution set  $\Sol(I)\subseteq [V,D]$ can be formulated inside the quantaloid $\Pow\FinSet$, where $\FinSet$ is the category of finite sets and functions. 
That is, we regard both $V$ and $D$ as objects of $\Pow\FinSet$, and each constraint $(k,\mathbf{x},\rho)\in\cat{C}$ as a triple consisting of the object $[k]=\{1,\dots,k\}$, the (singleton) morphism  $\{\mathbf{x}\}\colon [k]\pto V$ and the morphism $\rho\colon [k]\pto D$ in $\Pow\FinSet$.
The solution set can be regarded as a morphism $\Sol(I)\colon V\pto D$ in $\Pow\FinSet$, and may be expressed as 
$\Sol(I)=\bigcap_{(k,\mathbf{x},\rho)\in\cat{C}} \rho\swarrow\{\mathbf{x}\}$, 
in light of \eqref{eqn:right-ext-right-lift-PC}.

Let us fix a constraint language $(D,\cat{D})$. 
Observe that a CSP instance $I=(V,D,\cat{C})$ in $\CSP(\cat{D})$ can be equivalently specified by giving for each $k\in\mathbb{N}$ and each $\rho\in\cat{D}_k$, a $k$-ary relation $\sigma_\rho\subseteq V^k$ on $V$; $\sigma_\rho$ is the set of all constraint scopes $\mathbf{x}\in V^k$ such that $(k,\mathbf{x},\rho)\in\cat{C}$.
As is well-known, one can view $(V,(\sigma_\rho)_{k\in\mathbb{N},\rho\in\cat{D}_k})$ and $(D,(\rho)_{k\in\mathbb{N},\rho\in\cat{D}_k})$ as relational structures over a common relational signature, and the solutions are precisely the homomorphisms between these relational structures \cite{Feder-Vardi,Jeavons-algebraic}. 
An alternative, quantaloidal perspective is provided as follows.
For each $\rho\in\cat{D}_k$, $\sigma_\rho$ can be thought of as a (not necessarily singleton) morphism $\sigma_\rho\colon[k]\pto V$ in $\Pow\FinSet$. With this notation, the solution set is 
\begin{equation}
\label{eqn:Sol-CSP-language-based}
\Sol(I)=\bigcap_{\substack{k\in\mathbb{N}\\ \rho\in\cat{D}_k}} \rho\swarrow \sigma_\rho.
\end{equation}

Recall that to solve a CSP instance $I$ is to decide whether it has a solution or not. This amounts to deciding whether $\Sol(I)$ is empty or not. 
We can express this in the quantaloid $\Pow\FinSet$ as well. In \cref{sec:polymorphism-tropical} we shall see that formally the same construction captures the required output (the optimal value) for a certain class of optimisation problems. 
The set $[1]$ is the terminal object in $\FinSet$ (albeit not so in $\Pow\FinSet$), and thus there exists a unique function $!_D\colon D\to [1]$. This yields a canonical singleton morphism $\{!_D\}\colon D\pto [1]$ in $\Pow\FinSet$. The composition $\Opt(I)=\{!_D\}\circ\Sol(I)\colon V\pto [1]$ (which can take two values, as $\Pow\FinSet(V,[1])=\Pow(\{!_V\})=\{\emptyset, \{!_V\}\}$) is empty precisely when $\Sol(I)$ is empty, and is the singleton morphism $\{!_V\}$ otherwise. 
Therefore, to solve $I$ is to determine the morphism $\Opt(I)\colon V\pto[1]$. 

\medskip

We now move on to polymorphisms,  starting with a review of basic definitions. For any finite set $A$ and natural numbers $n$ and $k$, an \defemph{$n$-ary operation} on $A$ is a function $f\colon A^n\to A$, and a \defemph{$k$-ary relation} on $A$ is a subset $\rho\subseteq A^k$. We say that $f$ is a \defemph{polymorphism} of $\rho$ if for all $(x_{ij})\in A^{n\times k}$ we have 
\begin{equation}
\label{eqn:f-preserves-rho}
\big(
\rho(x_{11},\dots,x_{1k})\wedge \dots\wedge \rho(x_{n1},\dots,x_{nk})
\big)
\implies \rho(f(x_{11},\dots,x_{n1}),\dots,f(x_{1k},\dots,x_{nk})).
\end{equation}
We denote the set of all $n$-ary polymorphisms of $\rho$ by $\Pol(\rho)_n$. 

We express the construction  $\rho\mapsto \Pol(\rho)_n$ by means of basic operations in the quantaloid $\Pow\FinSet$.
First note that, as before, the relation $\rho\subseteq A^k$ can be seen as a morphism $\rho\colon [k]\pto A$ in $\Pow\FinSet$.
Similarly, the antecedent of \eqref{eqn:f-preserves-rho}, namely the $(n\times k)$-ary relation 
\[
\{\,(x_{ij})\in A^{n\times k}\mid\rho(x_{11},\dots,x_{1k})\wedge \dots\wedge \rho(x_{n1},\dots,x_{nk}) \,\}
\]
on $A$, can be regarded as a morphism $\rho^{\wedge n}\colon [k]\pto A^n$ in $\Pow\FinSet$.
We claim that $\rho^{\wedge n}$ is equal to the right lifting $\{\pi_i\}_{i=1}^n\searrow \rho$, where  $\{\pi_i\}_{i=1}^n\colon A^n\pto A$ is the  morphism in $\Pow\FinSet$ defined as the set of projections from the power $A^n$.
Indeed, if the tuple $(x_{ij})\in A^{n\times k}$ corresponds to the function $\chi\colon [k]\to A^n$ (so that $\chi(j)=(x_{1j},\dots,x_{nj})\in A^n$), then the function $\pi_i\circ \chi\colon [k]\to A$ corresponds to the tuple $(x_{i1},\dots,x_{ik})\in A^k$, hence the equality $\rho^{\wedge n}=\{\pi_i\}_{i=1}^n\searrow \rho$ follows from \eqref{eqn:right-ext-right-lift-PC}.
Now for each $n\in\mathbb{N}$, the set of all $n$-ary operations on $A$ preserving $\rho$ can be expressed as the right extension $\rho\swarrow \rho^{\wedge n}$, since 
\[
\rho\swarrow \rho^{\wedge n}=\{\,f\colon A^n\to A\mid\forall (x_{ij})\in\rho^{\wedge n}, (f(x_{11},\dots,x_{n1}),\dots,f(x_{1k},\dots,x_{nk}))\in\rho\,\},
\]
thus recovering the condition \eqref{eqn:f-preserves-rho}. Hence the set $\Pol(\rho)_n$ of all $n$-ary polymorphisms of $\rho$ can be written as the ``double dualisation'' $\rho\swarrow \big(\{\pi_i\}_{i=1}^n\searrow \rho\big)$ of $\{\pi_i\}_{i=1}^n$ with respect to $\rho$.
Note that for each $f\colon A^n\to A$, we have
\begin{equation}
    \label{eqn:polymorphism-searrow-rho}
    f\text{ is a polymorphism of }\rho \iff \{\pi_i\}_{i=1}^n\searrow \rho\subseteq \{f\}\searrow \rho,
\end{equation}
because 
\begin{align*}
    f\in \rho\swarrow \big(\{\pi_i\}_{i=1}^n\searrow \rho\big)
    &\iff \{f\}\subseteq \rho\swarrow \big(\{\pi_i\}_{i=1}^n\searrow \rho\big)\\
    &\iff \{f\}\circ \big(\{\pi_i\}_{i=1}^n\searrow \rho\big)\subseteq \rho\\
    &\iff
    \{\pi_i\}_{i=1}^n\searrow \rho\subseteq \{f\}\searrow\rho. 
\end{align*}

Given a family $\cat{R}=(\cat{R}_k)_{k\in\mathbb{N}}\in \prod_{k\in\mathbb{N}}\Pow(\Pow(A^k))$ of relations on $A$, we define for each $n\in\mathbb{N}$,
\[
\Pol(\cat{R})_n=\bigcap_{\substack{k\in\mathbb{N}\\
\rho\in\cat{R}_k}}\Pol(\rho)_n.
\]

\begin{proposition}
\label{prop:Pol-clone-PSet}
Let $A$ be a finite set and $\cat{R}\in\prod_{k\in\mathbb{N}}\Pow(\Pow(A^k))$.\footnote{Precisely speaking, 
the projection in clause~1 and the tupling in clause~2 must be taken with respect to the (chosen) set $\{\pi_i\}_{i=1}^n$ of projections used in the definition of $\Pol(\cat{R})$. In particular, the property of being a polymorphism for $\cat{R}$ is not invariant under composition of bijections. A similar remark applies to \cref{prop:Pol-clone-general} below.}
\begin{enumerate}
    \item For each $n\in\mathbb{N}$ and $i\in\{1,\dots,n\}$, the $i$-th projection $\pi_i\colon A^n\to A$ is in $\Pol(\cat{R})_n$.
    \item For each $m,n\in\mathbb{N}$, $g\in\Pol(\cat{R})_m$ and $f_1,\dots, f_m\in \Pol(\cat{R})_n$, we have $g\circ\langle f_1,\dots, f_m\rangle\in \Pol(\cat{R})_n$. (Here, $\langle f_1,\dots,f_m\rangle\colon A^n\to A^m$ is the tupling of $f_1,\dots,f_m\colon A^n\to A$.)
\end{enumerate}
\end{proposition}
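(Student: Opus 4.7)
The plan is to reduce both clauses to the characterisation \eqref{eqn:polymorphism-searrow-rho}, which says that $f$ is a polymorphism of $\rho$ precisely when $\{\pi_j\}_{j=1}^n\searrow\rho \subseteq \{f\}\searrow\rho$. Since $\Pol(\cat{R})_n$ is by definition the intersection of $\Pol(\rho)_n$ over all relations $\rho\in\cat{R}_k$ for $k\in\mathbb{N}$, it suffices to fix a single such $\rho$ in each clause and verify the containment for it.

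For clause 1, I would expand the ``set of projections'' morphism as a supremum of singletons and invoke the join-to-meet rule for $\searrow$ in its first argument from \cref{prop:quantaloid-basic-equations}(1), obtaining $\{\pi_j\}_{j=1}^n\searrow\rho \,=\, \bigcap_{j=1}^n(\{\pi_j\}\searrow\rho) \,\subseteq\, \{\pi_i\}\searrow\rho$, which is exactly the required inclusion.

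For clause 2, the strategy is to chain the two polymorphism hypotheses via an identification of the tupling $\langle f_1,\dots,f_m\rangle$ as a composition inside $\Pow\FinSet$. First, applying clause 1's argument to the hypothesis $f_j\in\Pol(\rho)_n$ for each $j$ yields $\{\pi_j\}_{j=1}^n\searrow\rho \,\subseteq\, \bigcap_{j=1}^m(\{f_j\}\searrow\rho) \,=\, \{f_j\}_{j=1}^m\searrow\rho$. Next, from $\pi_j\circ\langle f_1,\dots,f_m\rangle = f_j$ together with the formula $\{g\}\circ\{h\}=\{g\circ h\}$ for singletons (immediate from the composition law in $\Pow\FinSet$), one has $\{f_j\}_{j=1}^m = \{\pi_j\}_{j=1}^m \circ \{\langle f_1,\dots,f_m\rangle\}$, and then \cref{prop:quantaloid-basic-equations}(2) rewrites $\{f_j\}_{j=1}^m\searrow\rho$ as $\{\langle f_1,\dots,f_m\rangle\}\searrow(\{\pi_j\}_{j=1}^m\searrow\rho)$. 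Finally, the hypothesis $g\in\Pol(\rho)_m$ gives $\{\pi_j\}_{j=1}^m\searrow\rho \subseteq \{g\}\searrow\rho$; applying $\{\langle f_1,\dots,f_m\rangle\}\searrow(-)$ (monotone in the second argument) and then re-using \cref{prop:quantaloid-basic-equations}(2) in the reverse direction identifies the upper bound with $(\{g\}\circ\{\langle f_1,\dots,f_m\rangle\})\searrow\rho \,=\, \{g\circ\langle f_1,\dots,f_m\rangle\}\searrow\rho$, completing the chain.

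The only real obstacle is bookkeeping: at each step one must track which argument of $\searrow$ is being manipulated and select the correct clause of \cref{prop:quantaloid-basic-equations}. The one genuinely non-formal observation is the identity $\{f_j\}_{j=1}^m \,=\, \{\pi_j\}_{j=1}^m\circ\{\langle f_1,\dots,f_m\rangle\}$, which renders the external tupling operation on ordinary functions as a composition inside the quantaloid and is precisely what allows the hypotheses on $g$ and on the $f_j$'s to be combined through the associativity-like rule for $\searrow$.
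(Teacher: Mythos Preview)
Your proposal is correct and follows essentially the same route as the paper's proof: reduce to a single $\rho$, use the characterisation \eqref{eqn:polymorphism-searrow-rho}, and for clause~2 run the chain $\{\pi_i\}_{i=1}^n\searrow\rho\subseteq\{f_j\}_{j=1}^m\searrow\rho=\{\langle f_1,\dots,f_m\rangle\}\searrow(\{\pi_i\}_{i=1}^m\searrow\rho)\subseteq\{\langle f_1,\dots,f_m\rangle\}\searrow(\{g\}\searrow\rho)=\{g\circ\langle f_1,\dots,f_m\rangle\}\searrow\rho$ via \cref{prop:quantaloid-basic-equations}(2) and the identification $\{f_j\}_{j=1}^m=\{\pi_i\}_{i=1}^m\circ\{\langle f_1,\dots,f_m\rangle\}$. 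The only cosmetic difference is that for clause~1 the paper simply cites the order-reversal of $(-)\searrow\rho$, whereas you make this explicit via the join-to-meet rule; the content is identical.
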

\begin{proof}
One can easily show these using \eqref{eqn:f-preserves-rho}.
We shall present an alternative proof making full use of quantaloidal structure (\eqref{eqn:adjointness-quantaloid} and \cref{prop:quantaloid-basic-equations}), as a warm-up for a similar proof of \cref{prop:Pol-clone-general}.

Clearly, it suffices to consider the case where $\cat{R}$ consists of a single (say, $k$-ary) relation $\rho$. We shall use the criterion \eqref{eqn:polymorphism-searrow-rho}.
Clause 1 is clear because $(-)\searrow\rho$ is order-reversing. 
Under the assumption of clause 2, we have $\{\pi_i\}_{i=1}^m\searrow \rho\subseteq \{g\}\searrow\rho$ and $   \{\pi_i\}_{i=1}^n\searrow \rho\subseteq \{f_1,\dots,f_m\}\searrow\rho$. Hence, 
\begin{align*}
    \{\pi_i\}_{i=1}^n\searrow \rho
    &\subseteq \{f_1,\dots,f_m\}\searrow\rho\\
    &= \big(\{\pi_i\}_{i=1}^m\circ\{\langle f_1,\dots,f_m\rangle\}\big)\searrow\rho\\
    &=\{\langle f_1,\dots,f_m\rangle\}\searrow\big(\{\pi_i\}_{i=1}^m\searrow\rho\big)\\
    &\subseteq \{\langle f_1,\dots,f_m\rangle\}\searrow\big(\{g\}\searrow\rho\big)\\
    &=
    \big(\{g\}\circ\{\langle f_1,\dots,f_m\rangle\}\big)\searrow\rho\\
    &=\{g\circ\langle f_1,\dots,f_m\rangle\}\searrow\rho.\qedhere
\end{align*}
\end{proof}

\cref{prop:Pol-clone-PSet} states that polymorphisms form a clone: recall that a  \defemph{(concrete) clone} on a set $A$ is a family $\cat{F}\in \prod_{n\in\mathbb{N}}\Pow([A^n,A])$ of operations on $A$ satisfying the following.
\begin{enumerate}
    \item For each $n\in\mathbb{N}$ and $i\in\{1,\dots,n\}$, the $i$-th projection $\pi_i\colon A^n\to A$ is in $\cat{F}_n$.
    \item For each $m,n\in\mathbb{N}$, $g\in\cat{F}_m$ and $f_1,\dots, f_m\in \cat{F}_n$, we have $g\circ\langle f_1,\dots, f_m\rangle\in \cat{F}_n$.\qedhere
\end{enumerate}

Let us now formalise the informal claim that if a problem has certain symmetry, then so does its solution.

\begin{proposition}
\label{prop:Pol-closure-properties}
Let $A$ be a finite set. 
\begin{enumerate}
    \item If $k\in\mathbb{N}$ and $(\rho_j\colon [k]\pto A)_{j\in J}$ is a family of $k$-ary relations on $A$, then for each $n\in\mathbb{N}$ we have  $\bigcap_{j\in J}\Pol(\rho_j)_n\subseteq \Pol(\bigcap_{j\in J}\rho_j)_n$.
    \item If $k,l\in\mathbb{N}$, $\rho\colon [k]\pto A$ is a $k$-ary relation on $A$, and $\sigma\colon [k]\pto[l]$ is a morphism in $\Pow\FinSet$, then for each $n\in\mathbb{N}$ we have  $\Pol(\rho)_n\subseteq \Pol(\rho\swarrow \sigma)_n$.
\end{enumerate}
\end{proposition}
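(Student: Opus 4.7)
The plan is to prove both clauses by purely formal manipulation in $\Pow\FinSet$, in the same spirit as the alternative proof of \cref{prop:Pol-clone-PSet}. The key reformulation is the criterion \eqref{eqn:polymorphism-searrow-rho}, which reduces ``$f\in\Pol(\rho)_n$'' to the single inclusion $\{\pi_i\}_{i=1}^n\searrow\rho\subseteq\{f\}\searrow\rho$; under this translation both claims become inclusions of right liftings that can be handled directly using the identities collected in \cref{prop:quantaloid-basic-equations}. It therefore suffices to fix an arbitrary $f\colon A^n\to A$ and verify the corresponding inclusions.

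For clause 1, I start from the inclusions $\{\pi_i\}_{i=1}^n\searrow\rho_j\subseteq\{f\}\searrow\rho_j$ for each $j\in J$, which follow from the hypothesis $f\in\bigcap_{j\in J}\Pol(\rho_j)_n$ via \eqref{eqn:polymorphism-searrow-rho}. Intersecting over $j$ and invoking the identity $\psi\searrow\bigwedge_{j\in J}\theta_j=\bigwedge_{j\in J}(\psi\searrow\theta_j)$ from \cref{prop:quantaloid-basic-equations} (applied both with $\psi=\{\pi_i\}_{i=1}^n$ and with $\psi=\{f\}$) yields
\[
\{\pi_i\}_{i=1}^n\searrow\bigcap_{j\in J}\rho_j \;=\; \bigcap_{j\in J}\bigl(\{\pi_i\}_{i=1}^n\searrow\rho_j\bigr) \;\subseteq\; \bigcap_{j\in J}\bigl(\{f\}\searrow\rho_j\bigr) \;=\; \{f\}\searrow\bigcap_{j\in J}\rho_j,
\]
and a second appeal to \eqref{eqn:polymorphism-searrow-rho} gives $f\in\Pol\bigl(\bigcap_{j\in J}\rho_j\bigr)_n$.

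For clause 2, the central tool is the identity $\theta\searrow(\gamma\swarrow\varphi)=(\theta\searrow\gamma)\swarrow\varphi$ from \cref{prop:quantaloid-basic-equations}, which lets me commute right lifting past right extension. Instantiating $\gamma=\rho$, $\varphi=\sigma$, and $\theta\in\{\{\pi_i\}_{i=1}^n,\{f\}\}$ produces
\[
\{\pi_i\}_{i=1}^n\searrow(\rho\swarrow\sigma) \;=\; \bigl(\{\pi_i\}_{i=1}^n\searrow\rho\bigr)\swarrow\sigma \quad\text{and}\quad \{f\}\searrow(\rho\swarrow\sigma) \;=\; \bigl(\{f\}\searrow\rho\bigr)\swarrow\sigma.
\]
Since $(-)\swarrow\sigma$ is monotone (as a right adjoint), the assumption $f\in\Pol(\rho)_n$ — i.e.\ $\{\pi_i\}_{i=1}^n\searrow\rho\subseteq\{f\}\searrow\rho$ by \eqref{eqn:polymorphism-searrow-rho} — propagates to $\{\pi_i\}_{i=1}^n\searrow(\rho\swarrow\sigma)\subseteq\{f\}\searrow(\rho\swarrow\sigma)$, so $f\in\Pol(\rho\swarrow\sigma)_n$.

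I do not anticipate any real obstacle: once \eqref{eqn:polymorphism-searrow-rho} is in hand, both clauses reduce to adjointness bookkeeping, and the only mild care is matching the arities of the morphisms $\{\pi_i\}_{i=1}^n\colon A^n\pto A$, $\sigma\colon [k]\pto[l]$, and $\rho\colon [k]\pto A$ when applying the identities from \cref{prop:quantaloid-basic-equations}. An appealing feature of this quantaloidal argument is that it never invokes the combinatorial condition \eqref{eqn:f-preserves-rho}, so it should transpose cleanly to the more general setting $\cat{QA}$ and underpin the analogous closure statement \cref{prop:Pol-closure-properties-QA} promised later in the paper.
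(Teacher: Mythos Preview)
Your proposal is correct and is essentially identical to the paper's own proof: both clauses are handled by translating via \eqref{eqn:polymorphism-searrow-rho} and then invoking the appropriate identities from \cref{prop:quantaloid-basic-equations}---$\psi\searrow\bigwedge_j\theta_j=\bigwedge_j(\psi\searrow\theta_j)$ for clause~1 and $\theta\searrow(\gamma\swarrow\varphi)=(\theta\searrow\gamma)\swarrow\varphi$ together with monotonicity of $(-)\swarrow\sigma$ for clause~2.
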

\begin{proof}
Recall that the condition for an $n$-ary operation $f$ to be a polymorphism can be expressed as \eqref{eqn:polymorphism-searrow-rho}.

\begin{enumerate}
    \item If $f\in \bigcap_{j\in J}\Pol(\rho_{j})_n$, then we have $\{\pi_i\}_{i=1}^n\searrow\rho_{j}\subseteq\{f\}\searrow\rho_{j}$ for each $j\in J$.  By  \cref{prop:quantaloid-basic-equations},
\[
\{\pi_i\}_{i=1}^n\searrow\bigcap_{j\in J}\rho_{j}=\bigcap_{j\in J}(\{\pi_i\}_{i=1}^n\searrow\rho_{j})\subseteq \bigcap_{j\in J}(\{f\}\searrow\rho_{j})=\{f\}\searrow \bigcap_{j\in J}\rho_{j}.
\]
\item If $f\in\Pol(\rho)_n$, then we have $\{\pi_i\}_{i=1}^n\searrow\rho\subseteq\{f\}\searrow\rho$. By \cref{prop:quantaloid-basic-equations},
\[
\{\pi_i\}_{i=1}^n\searrow (\rho\swarrow \sigma)=(\{\pi_i\}_{i=1}^n\searrow \rho)\swarrow \sigma\subseteq (\{f\}\searrow \rho)\swarrow \sigma=\{f\}\searrow(\rho\swarrow\sigma).\qedhere
\]
\end{enumerate}
\end{proof}

In view of \eqref{eqn:Sol-CSP-language-based}, we immediately have the following.
\begin{corollary}
\label{cor:Pol-Sol-CSP}
Let $D$ be a finite set, $\cat{D}\in \prod_{k\in\mathbb{N}}\Pow(\Pow(D^k))$ and $I=(V,D,\cat{C})\in\CSP(\cat{D})$. Then for each $n\in\mathbb{N}$ we have $\Pol(\cat{D})_n\subseteq \Pol(\Sol(I))_n$.
\end{corollary}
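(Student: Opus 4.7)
The plan is to chain the two clauses of \cref{prop:Pol-closure-properties} together, using the expression $\Sol(I)=\bigcap_{k\in\mathbb{N},\,\rho\in\cat{D}_k}\rho\swarrow\sigma_\rho$ from \eqref{eqn:Sol-CSP-language-based}. Recall that each $\sigma_\rho\colon [k]\pto V$ in $\Pow\FinSet$ packages those constraint scopes of $I$ whose constraint relation is $\rho$.

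First I would fix $k\in\mathbb{N}$ and $\rho\in\cat{D}_k$ and apply clause 2 of \cref{prop:Pol-closure-properties} to $\rho\colon[k]\pto D$ and $\sigma_\rho\colon[k]\pto V$ to obtain $\Pol(\rho)_n\subseteq\Pol(\rho\swarrow\sigma_\rho)_n$. Intersecting these inclusions over all such pairs $(k,\rho)$ and using the definition of $\Pol(\cat{D})_n$ gives $\Pol(\cat{D})_n\subseteq\bigcap_{k,\,\rho\in\cat{D}_k}\Pol(\rho\swarrow\sigma_\rho)_n$. Since each $\rho\swarrow\sigma_\rho$ is a morphism $V\pto D$ in $\Pow\FinSet$, i.e., a $|V|$-ary relation on the common set $D$, clause 1 of \cref{prop:Pol-closure-properties} applies to this family and, combined with \eqref{eqn:Sol-CSP-language-based}, yields $\bigcap_{k,\,\rho\in\cat{D}_k}\Pol(\rho\swarrow\sigma_\rho)_n\subseteq\Pol(\Sol(I))_n$. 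Composing the two inclusions gives the corollary.

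I do not expect any substantive obstacle: the quantaloidal content is already packaged inside \cref{prop:Pol-closure-properties}, and the corollary is essentially its two clauses applied in turn to the decomposition of $\Sol(I)$. The only minor bookkeeping point is a harmless identification of $V$ with $[|V|]$ needed to match the formulation of clause 1, which is phrased for relations of the form $[k]\pto A$ but depends only on the source and target of the family being a common pair of objects in $\Pow\FinSet$.
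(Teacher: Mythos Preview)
Your proposal is correct and is precisely the intended argument: the paper's own proof is the single sentence ``In view of \eqref{eqn:Sol-CSP-language-based}, we immediately have the following,'' and your write-up simply unpacks that immediacy by applying the two clauses of \cref{prop:Pol-closure-properties} in turn. Your remark about the harmless identification of $V$ with $[|V|]$ is also apt; the proof of \cref{prop:Pol-closure-properties} uses only quantaloidal structure and does not depend on the source objects literally being of the form $[k]$.
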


We conclude this section with a precise statement of the dichotomy theorem. There are several possible ways to phrase the theorem, the following being one of them (see~\cite[Theorem 41]{BKW17}).
\begin{definition}
\label{def:Siggers}
A $4$-ary operation $f : D^4 \to D$ on a finite set $D$ is said to be \defemph{Siggers}
if it satisfies
\begin{align*}
    f(y, x, y, z) = f(x, y ,z, x)
\end{align*}
for all $(x, y ,z) \in D^3$.
\end{definition}
\begin{theorem}[{Dichotomy theorem~\cite{Bul17,Zhu20}}]
\label{thm:dichotomy-CSP}
Let $D$ be a finite set and $\cat{D}$ a finite set of relations on $D$.
If some Siggers operation is a polymorphism of $\cat{D}$,
then $\CSP(\cat{D})$ is in P.
Otherwise, it is NP-complete.
\end{theorem}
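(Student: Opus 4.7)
The plan is to establish the two implications separately, using the algebraic framework that the quantaloidal reformulation above has been setting up. For the hardness direction, I would use the standard $\Pol$--$\Inv$ Galois connection from universal algebra: if a relation $\rho'$ is primitive-positive definable from $\cat{D}$, then $\CSP(\{\rho'\})$ reduces in polynomial time to $\CSP(\cat{D})$, and moreover $\Pol(\cat{D}) \subseteq \Pol(\rho')$ (the latter being very much in the spirit of \cref{prop:Pol-closure-properties}, which already shows closure under intersections and right extensions). It then suffices to show that if no Siggers operation is a polymorphism of $\cat{D}$, then one can pp-define from $\cat{D}$ a relation whose CSP is already known to be NP-hard, such as the $1$-in-$3$-SAT or NAE-SAT relation. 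The key ingredient here is the Taylor--Siggers characterization: a finite idempotent algebra generates a Taylor variety if and only if it admits a Siggers term; otherwise the algebra has a ``$G$-set-type'' quotient from which an NP-hard constraint language pp-interprets combinatorially. NP-membership being immediate, this yields the NP-completeness side.

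For the tractability direction I would invoke the Bulatov/Zhuk algorithm from \cite{Bul17,Zhu20}. The overall strategy is a structural recursion on the underlying algebra $(D,\Pol(\cat{D}))$: first reduce to a \emph{core} constraint language (one whose unary polymorphisms are all bijections), fix constants so that the algebra becomes idempotent, and then show that the existence of a Siggers polymorphism forces one of several tame reductions at each step. Typically these are: the algebra admits a non-trivial congruence, in which case one quotients and recurses on the classes; or it has an affine / module-like structure, in which case the instance reduces to solving a system of linear equations over a finite ring; or it admits a semilattice or near-unanimity polymorphism, in which case local consistency of bounded width suffices. Recursing through the three cases solves any instance in polynomial time.

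The main obstacle is clearly the tractability direction: verifying that the above trichotomy of reductions is genuinely exhaustive, and that each case really does run in polynomial time, is what forced the original proofs in \cite{Bul17,Zhu20} to span hundreds of pages and to develop most of the deep universal algebra (absorption theory, centralizers, algebraic geometry over finite algebras) underlying the field. The quantaloidal viewpoint developed in this paper clarifies the polymorphism side cleanly through \cref{prop:Pol-clone-PSet} and \cref{cor:Pol-Sol-CSP}, but does not on its own shortcut this algorithmic content; in practice I would simply cite \cite{Bul17,Zhu20} as black boxes.
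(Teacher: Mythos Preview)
The paper does not give its own proof of this statement: \cref{thm:dichotomy-CSP} is stated as a known result with citations to \cite{Bul17,Zhu20} and is used as a black box in the subsequent analysis of TVCSPs. Your proposal is a reasonable high-level outline of the argument in those references (hardness via the $\Pol$--$\Inv$ Galois connection and the Taylor--Siggers characterisation, tractability via the Bulatov/Zhuk algorithms), and you are right that the quantaloidal perspective here does not shortcut the algorithmic content; but there is nothing in the paper to compare against beyond the bare citation, so in that sense your sketch already goes further than the paper does.
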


\section{The quantaloidal CSP}
\label{sec:CSP-in-QA}
The above reformulation of CSPs and polymorphisms in the quantaloid $\Pow\FinSet$ suggests that a certain part of the mathematical theory of the CSP can be developed in a much broader context. 
In this section we shall embark on such a development. Although the discussion below might look rather formal, we shall apply it to a certain class of optimisation problems in the next section (which can be read independently of this section).
We remark that in the special case where $\cat{Q}=\mathbf{2}$ (hence $\cat{QA}$ is the free quantaloid $\cat{PA}$), some of the notions introduced below appear in \cite{Kerkhoff-general}.

Let us take any quantale $\cat{Q}=(Q,\leq,e,\otimes)$ and any locally small category $\cat{A}$ with finite products; we shall work within the quantaloid $\cat{QA}$ instead of $\Pow\FinSet$.
For objects $A,K\in\cat{A}$, we define a \defemph{$K$-ary $\cat{Q}$-valued relation} on $A$ to be a morphism $K\pto A$ in $\cat{QA}$. 
Given such a $\cat{Q}$-valued relation $\rho\colon K\pto A$ and a natural number $n$, we define the (totality of) \defemph{$n$-ary $\cat{Q}$-valued polymorphisms} for $\rho$ as $\Pol(\rho)_n=\rho\swarrow\big(\{\pi_i\}_{i=1}^n\searrow \rho\big)$, where $\{\pi_i\}_{i=1}^n\colon A^n\pto A$ is the morphism in $\cat{QA}$ defined as 
\[
\{\pi_i\}_{i=1}^n(f)=\begin{cases}
e&\text{if $f$ is the $i$-th projection $\pi_i\colon A^n\to A$ for some $i\in\{1,\dots,n\}$},\\
\text{the least element $\bot$ of  $Q$}&\text{otherwise},
\end{cases}
\]
for all $f\in\cat{A}(A^n,A)$. 
Notice that $\Pol(\rho)_n$ is a morphism $A^n\pto A$ in $\cat{QA}$, i.e., it assigns to each morphism $f\colon A^n\to A$ in $\cat{A}$ an element $\Pol(\rho)_n(f)$ of $Q$, which may be thought of as the degree to which $f$ is a polymorphism of $\rho$.

\emph{Individual} polymorphisms (as opposed to the \emph{totality} of them) can then be defined as follows. An  (individual) \defemph{$n$-ary $\cat{Q}$-valued polymorphism} for $\rho\colon K\pto A$ is a pair $(f\in\cat{A}(A^n,A), \alpha\in Q)$ such that $\alpha\leq \Pol(\rho)_n(f)$. Using a notation introduced in \cref{ex:QA}, the latter condition is equivalent to $\{\pi_i\}_{i=1}^n\searrow \rho\leq \{f\}^\alpha\searrow\rho$; cf.~\eqref{eqn:polymorphism-searrow-rho}.
This in turn amounts to the following more explicit condition, generalising \eqref{eqn:f-preserves-rho}: for any $\chi\colon K\to A^n$ in $\cat{A}$, we have 
$\alpha\otimes \big(\rho(\pi_1\circ\chi)\wedge\dots\wedge \rho(\pi_n\circ \chi)\big)\leq \rho(f\circ \chi)$.

For a set $\cat{R}$ of $\cat{Q}$-valued relations on $A\in\cat{A}$  (i.e., $\cat{R}$ is a set of morphisms in $\cat{QA}$ with codomain $A$) and  $n\in\mathbb{N}$, we define $\Pol(\cat{R})_n=\bigwedge_{\rho\in \cat{R}}\Pol(\rho)_n$. We say $(f\in\cat{A}(A^n,A),\alpha\in Q)$ is an \defemph{$n$-ary $\cat{Q}$-valued polymorphism} of $\cat{R}$ if $\alpha\leq \Pol(\cat{R})_n(f)$. 

The following propositions generalise \cref{prop:Pol-clone-PSet,prop:Pol-closure-properties}, respectively.
\begin{proposition}
\label{prop:Pol-clone-general}
Let $\cat{Q}=(Q,\leq,e,\otimes)$ be a quantale, $\cat{A}$ a locally small category with finite products, $A\in\cat{A}$ and $\cat{R}$ a set of $\cat{Q}$-valued relations on $A$. 
\begin{enumerate}
    \item For each $n\in\mathbb{N}$ and $i\in\{1,\dots,n\}$, the $i$-th projection $\pi_i\colon A^n\to A$ satisfies $e\leq \Pol(\cat{R})_n(\pi_i)$.
    \item For each $m,n\in\mathbb{N}$, $g\colon A^m\to A$ and $f_1,\dots, f_m\colon A^n\to A$ in $\cat{A}$, we have 
    \begin{equation*}
    \Pol(\cat{R})_m(g)\otimes\big(\Pol(\cat{R})_n(f_1)\wedge \dots\wedge \Pol(\cat{R})_n(f_m) \big)\leq \Pol(\cat{R})_n(g\circ\langle f_1,\dots,f_m\rangle).
    \end{equation*}
\end{enumerate}
\end{proposition}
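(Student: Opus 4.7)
The plan is to follow the quantaloidal argument in the proof of \cref{prop:Pol-clone-PSet}, adapted to track the tensor $\otimes$. First I would reduce to the case where $\cat{R} = \{\rho\}$ is a single $\cat{Q}$-valued relation: since $\Pol(\cat{R})_n = \bigwedge_{\rho \in \cat{R}} \Pol(\rho)_n$ is a pointwise meet in $\cat{QA}(A^n, A)$, clause~1 of the proposition reduces immediately, and clause~2 reduces because monotonicity of $\otimes$ in each variable yields $(\bigwedge_{\rho} a_\rho) \otimes (\bigwedge_{\rho} b_\rho) \leq \bigwedge_{\rho}(a_\rho \otimes b_\rho)$ for any families $(a_\rho)_\rho$ and $(b_\rho)_\rho$ in $\cat{Q}$. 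Next, by applying \eqref{eqn:adjointness-quantaloid} twice to the definition $\Pol(\rho)_n = \rho \swarrow (\{\pi_i\}_{i=1}^n \searrow \rho)$, the statement $\alpha \leq \Pol(\rho)_n(f)$ for $f \colon A^n \to A$ and $\alpha \in Q$ is equivalent to $\{\pi_i\}_{i=1}^n \searrow \rho \leq \{f\}^\alpha \searrow \rho$, the $\cat{QA}$-analogue of \eqref{eqn:polymorphism-searrow-rho}; this is the form in which I would verify both clauses.

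For clause~1, it suffices to note the pointwise inequality $\{\pi_i\}^e \leq \{\pi_j\}_{j=1}^n$ in $\cat{QA}(A^n, A)$: both morphisms take the value $e$ at $\pi_i$, and the right-hand side dominates the left-hand side (which equals $\bot$) at every morphism $A^n \to A$ other than one of the projections. Applying the order-reversing operation $(-) \searrow \rho$ (available by clause~1 of \cref{prop:quantaloid-basic-equations}) yields the desired $\{\pi_j\}_{j=1}^n \searrow \rho \leq \{\pi_i\}^e \searrow \rho$.

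For clause~2, set $\alpha = \Pol(\rho)_m(g)$ and $\beta = \bigwedge_{j=1}^m \Pol(\rho)_n(f_j)$, so the hypotheses become $\{\pi_i\}_{i=1}^m \searrow \rho \leq \{g\}^\alpha \searrow \rho$ and $\{\pi_i\}_{i=1}^n \searrow \rho \leq \{f_j\}^\beta \searrow \rho$ for each $j$. I would combine the second family via clause~1 of \cref{prop:quantaloid-basic-equations} into $\{\pi_i\}_{i=1}^n \searrow \rho \leq \bigl(\bigvee_{j=1}^m \{f_j\}^\beta\bigr) \searrow \rho$, and then chain this down to $\{g \circ \langle f_1,\dots,f_m\rangle\}^{\alpha \otimes \beta} \searrow \rho$ by means of the rule $(\theta \circ \psi) \searrow \gamma = \psi \searrow (\theta \searrow \gamma)$ from clause~2 of \cref{prop:quantaloid-basic-equations}, the monotonicity of $\{\langle f_1,\dots,f_m\rangle\}^\beta \searrow (-)$ applied to the first hypothesis, and the two elementary identities in $\cat{QA}$
\begin{equation*}
\{\pi_i\}_{i=1}^m \circ \{\langle f_1,\dots,f_m\rangle\}^\beta = \bigvee_{j=1}^m \{f_j\}^\beta
\quad\text{and}\quad
\{g\}^\alpha \circ \{\langle f_1,\dots,f_m\rangle\}^\beta = \{g \circ \langle f_1,\dots,f_m\rangle\}^{\alpha \otimes \beta}.
\end{equation*}
Unwinding via adjointness then gives $\alpha \otimes \beta \leq \Pol(\rho)_n(g \circ \langle f_1,\dots,f_m\rangle)$, the single-$\rho$ form of the claim.

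The main obstacle is verifying the two composition identities above. Both follow by unfolding the composition formula in $\cat{QA}$ from \cref{ex:QA} and using the universal property of products ($\pi_j \circ \langle f_1,\dots,f_m\rangle = f_j$), but because $\otimes$ need not be commutative, the order of the factors in $\alpha \otimes \beta$ --- dictated by the order of composition --- must be tracked carefully. With those identities in hand, the remainder of the argument is a straightforward lifting of the proof of \cref{prop:Pol-clone-PSet} from the case $\cat{Q} = \mathbf{2}$ to a general quantale.
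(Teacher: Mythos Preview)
Your proposal is correct and follows essentially the same route as the paper's proof: reduce to a single $\rho$, rewrite $\alpha\leq\Pol(\rho)_n(f)$ as $\{\pi_i\}_{i=1}^n\searrow\rho\leq\{f\}^\alpha\searrow\rho$, and then chain inequalities using the two composition identities $\{\pi_i\}_{i=1}^m\circ\{\langle f_1,\dots,f_m\rangle\}^\beta=\bigvee_j\{f_j\}^\beta$ and $\{g\}^\alpha\circ\{\langle f_1,\dots,f_m\rangle\}^\beta=\{g\circ\langle f_1,\dots,f_m\rangle\}^{\alpha\otimes\beta}$. The only difference is cosmetic (your $\alpha,\beta$ are the paper's $\beta,\alpha$), and you spell out the reduction to a single $\rho$ for clause~2 via monotonicity of $\otimes$, which the paper leaves implicit.
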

\begin{proof}
We consider the case where $\cat{R}$ consists of a single morphism $\rho\colon K\pto A$ in $\cat{QA}$.
Clause~1 amounts to the claim that $\{\pi_i\}\leq \rho\swarrow \big(\{\pi_i\}_{i=1}^n\searrow \rho\big)$, and can be shown as in the proof of \cref{prop:Pol-clone-PSet}. 
For clause~2, it suffices to show that for any $\alpha,\beta\in Q$, $\beta\leq \Pol(\rho)_m(g)$ and $\alpha\leq\Pol(\rho)_n(f_1)\wedge\dots\wedge \Pol(\rho)_n(f_m)$ imply $\beta\otimes\alpha\leq \Pol(\rho)_n(g\circ\langle f_1,\dots,f_m\rangle)$. 
Define $\{f_1,\dots,f_m\}^\alpha\colon A^n\pto A$ as the join of $\{f_1\}^\alpha,\dots,\{f_m\}^\alpha$ (cf.~\cref{ex:QA}); that is,
\[
\{f_1,\dots,f_m\}^\alpha(f')=\begin{cases}
\alpha&\text{if $f'=f_i$ for some $i\in\{1,\dots,m\}$},\\
\bot
&\text{otherwise}
\end{cases}
\]
for all $f'\in\cat{A}(A^n,A)$.
The assumptions amount to $\{g\}^\beta\leq \rho\swarrow\big(\{\pi_i\}_{i=1}^m\searrow \rho\big)$ and $\{f_1,\dots,f_m\}^\alpha\leq \rho\swarrow\big(\{\pi_i\}_{i=1}^n\searrow \rho\big)$, i.e., $\{\pi_i\}_{i=1}^m\searrow \rho\leq \{g\}^\beta\searrow \rho$ and $\{\pi_i\}_{i=1}^n\searrow \rho\leq \{f_1,\dots,f_m\}^\alpha\searrow \rho$.
Hence
\begin{align*}
    \{\pi_i\}_{i=1}^n\searrow \rho
    &\leq \{f_1,\dots,f_m\}^\alpha\searrow\rho\\
    &= \big(\{\pi_i\}_{i=1}^m\circ\{\langle f_1,\dots,f_m\rangle\}^\alpha\big)\searrow\rho\\
    &=\{\langle f_1,\dots,f_m\rangle\}^\alpha\searrow\big(\{\pi_i\}_{i=1}^m\searrow\rho\big)\\
    &\leq \{\langle f_1,\dots,f_m\rangle\}^\alpha\searrow\big(\{g\}^\beta\searrow\rho\big)\\
    &=
    \big(\{g\}^\beta\circ\{\langle f_1,\dots,f_m\rangle\}^\alpha\big)\searrow\rho\\
    &=\{g\circ\langle f_1,\dots,f_m\rangle\}^{\beta\otimes\alpha}\searrow\rho,
\end{align*}
showing $\beta\otimes\alpha\leq \Pol(\rho)_n(g\circ\langle f_1,\dots,f_m\rangle)$.
\end{proof}

\begin{proposition}
\label{prop:Pol-closure-properties-QA}
Let $\cat{Q}$ be a quantale, $\cat{A}$ a locally small category with finite products and $A\in\cat{A}$. 
\begin{enumerate}
    \item If $K\in\cat{A}$ and $(\rho_j\colon K\pto A)_{j\in J}$ is a family of $K$-ary $\cat{Q}$-valued relations on $A$, then for each $n\in\mathbb{N}$, we have $\bigwedge_{j\in J}\Pol(\rho_j)_n\leq \Pol(\bigwedge_{j\in J}\rho_j)_n$.
    \item If $K,L\in\cat{A}$, $\rho\colon K\pto A$ is a $K$-ary $\cat{Q}$-valued relation on $A$, and $\sigma\colon K\pto L$ is a morphism in $\cat{QA}$, then for each $n\in\mathbb{N}$, we have $\Pol(\rho)_n\leq \Pol(\rho\swarrow \sigma)_n$.
\end{enumerate}
\end{proposition}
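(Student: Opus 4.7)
The plan is to mimic the proof of \cref{prop:Pol-closure-properties}, replacing the set-theoretic reasoning specific to $\Pow\FinSet$ by the formal identities in \cref{prop:quantaloid-basic-equations}. Writing $\sigma_n = \{\pi_i\}_{i=1}^n\colon A^n\pto A$, recall that $\Pol(\rho)_n=\rho\swarrow(\sigma_n\searrow\rho)$, so both clauses reduce to comparing compound right extensions and right liftings in $\cat{QA}$.

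For clause~1, I would expand the right-hand side using the identities $\sigma_n\searrow\bigwedge_{j\in J}\rho_j=\bigwedge_{j\in J}(\sigma_n\searrow\rho_j)$ and $\bigl(\bigwedge_{j\in J}\rho_j\bigr)\swarrow\tau=\bigwedge_{j\in J}(\rho_j\swarrow\tau)$ from \cref{prop:quantaloid-basic-equations}, obtaining
\[
\Pol\Bigl(\bigwedge_{j\in J}\rho_j\Bigr)_n=\bigwedge_{j\in J}\Bigl(\rho_j\swarrow\bigwedge_{k\in J}(\sigma_n\searrow\rho_k)\Bigr).
\]
For each $j$, antitonicity of $\swarrow$ in its second argument (immediate from \eqref{eqn:adjointness-quantaloid}), combined with $\bigwedge_{k\in J}(\sigma_n\searrow\rho_k)\leq \sigma_n\searrow\rho_j$, gives $\rho_j\swarrow\bigwedge_{k\in J}(\sigma_n\searrow\rho_k)\geq \rho_j\swarrow(\sigma_n\searrow\rho_j)=\Pol(\rho_j)_n$. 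Taking the infimum over $j$ yields the desired inequality.

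For clause~2, I would apply the interchange identity $\theta\searrow(\gamma\swarrow\varphi)=(\theta\searrow\gamma)\swarrow\varphi$ from \cref{prop:quantaloid-basic-equations} to rewrite $\sigma_n\searrow(\rho\swarrow\sigma)=(\sigma_n\searrow\rho)\swarrow\sigma$, so that $\Pol(\rho\swarrow\sigma)_n=(\rho\swarrow\sigma)\swarrow\bigl((\sigma_n\searrow\rho)\swarrow\sigma\bigr)$. By two applications of the adjointness \eqref{eqn:adjointness-quantaloid}, the required inequality $\Pol(\rho)_n\leq\Pol(\rho\swarrow\sigma)_n$ is equivalent to
\[
\Pol(\rho)_n\circ\bigl((\sigma_n\searrow\rho)\swarrow\sigma\bigr)\circ\sigma\leq\rho,
\]
which follows by chaining the two counit inequalities $\bigl((\sigma_n\searrow\rho)\swarrow\sigma\bigr)\circ\sigma\leq\sigma_n\searrow\rho$ and $\Pol(\rho)_n\circ(\sigma_n\searrow\rho)\leq\rho$, together with the monotonicity of composition in each variable.

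I anticipate that the main obstacle is clause~2: one has to invoke the correct interchange law between $\swarrow$ and $\searrow$ and then recognise the resulting inequality as a purely formal consequence of the two counit inequalities obtained by unfolding the definition of $\Pol$. Once this is in place, no properties specific to $\cat{Q}$ or $\cat{A}$ are used, and both clauses are established by pure quantaloid calculus.
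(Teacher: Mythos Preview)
Your argument is correct. Both clauses are valid quantaloid calculus, and the identities and counit inequalities you invoke are exactly the ones needed.

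The paper's own proof is just a pointer: it tells the reader to adapt the proof of \cref{prop:Pol-closure-properties} ``along the lines of'' \cref{prop:Pol-clone-general}, meaning one should argue elementwise via individual polymorphisms $(f,\alpha)$ and the criterion $\{\pi_i\}_{i=1}^n\searrow\rho\leq\{f\}^\alpha\searrow\rho$. In that style, clause~2 is a one-liner: from $\sigma_n\searrow\rho\leq\{f\}^\alpha\searrow\rho$ one gets
\[
\sigma_n\searrow(\rho\swarrow\sigma)=(\sigma_n\searrow\rho)\swarrow\sigma\leq(\{f\}^\alpha\searrow\rho)\swarrow\sigma=\{f\}^\alpha\searrow(\rho\swarrow\sigma),
\]
using only the interchange law and monotonicity of $(-)\swarrow\sigma$. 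Your version instead works globally with the morphism $\Pol(\rho)_n$ and unwinds the adjunctions down to the two counit inequalities. This is a genuine, if minor, difference in execution: the elementwise route is shorter for clause~2, while your global argument has the virtue of never mentioning individual $(f,\alpha)$ at all, so it makes explicit that the result is a statement purely about morphisms in $\cat{QA}$. Both approaches rest on the same identities from \cref{prop:quantaloid-basic-equations}.
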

\begin{proof}
One can show these by a straightforward modification of the proof of \cref{prop:Pol-closure-properties}, along the lines of the proof of \cref{prop:Pol-clone-general}.
\end{proof}

In the remainder of this section, we shall sketch the \emph{quantaloidal CSP} 
in $\cat{QA}$. 
In order to render the following as a well-defined computational problem, we would have to specify suitable machine representations of the data involved. We shall omit such considerations in this section, and simplify the discussion by allowing infinitely many constraints as well as infinite constraint languages.

An \emph{instance} $I=(V,D,\cat{C})$ consists of objects $V,D\in\cat{A}$ and a set $\cat{C}$ of \emph{$\cat{Q}$-valued constraints (in $\cat{A}$)}. 
There seems to be a few possibilities concerning the detail of a definition of $\cat{Q}$-valued constraint.

\begin{enumerate}
\item A straightforward approach is to define a $\cat{Q}$-valued constraint as a triple $(K,\mathbf{x},\rho)$ consisting of an object $K\in\cat{A}$, a morphism $\mathbf{x}\colon K\to V$ in $\cat{A}$ and a morphism $\rho\colon K\pto D$ in $\cat{QA}$. We may then define the morphism $\Sol(I)\colon V\pto D$ by 
$\Sol(I)=\bigwedge_{(K,\mathbf{x},\rho)\in \cat{C}} \rho\swarrow \{\mathbf{x}\}$.
\item The second possibility is to define a {$\cat{Q}$-valued constraint} as a quadruple $(K,\mathbf{x},\alpha,\rho)$, adding a new component $\alpha\in Q$.
$\Sol(I)$ is now given as $\bigwedge_{(K,\mathbf{x},\alpha,\rho)\in \cat{C}} \rho\swarrow \{\mathbf{x}\}^\alpha$.
This latter formulation seems to be better suited for considerations involving $\cat{Q}$-valued constraint languages.
A \emph{$\cat{Q}$-valued constraint language} consists of a pair $(D,\cat{D})$ of an object $D\in\cat{A}$ and a set $\cat{D}$ of morphisms in $\cat{QA}$ with codomain $D$.
Given an instance $I=(V,D,\cat{C})$ such that $\rho\in\cat{D}$ for each $(K,\mathbf{x},\alpha,\rho)\in\cat{C}$, we may define for each (say, $K$-ary) $\rho\in \cat{D}$ the morphism $\sigma_\rho\colon K\pto V$ as the supremum of all morphisms $\{\mathbf{x}\}^\alpha$ with $(K,\mathbf{x},\alpha,\rho)\in\cat{C}$.
In view of the equation $\bigwedge_{j\in J}\big(\rho\swarrow \sigma_j\big)=\rho\swarrow \big(\bigvee_{j\in J}\sigma_j\big)$, we have 
$\Sol(I)=\bigwedge_{\rho\in \cat{D}} \rho\swarrow \sigma_\rho$. Notice that \cref{prop:Pol-closure-properties-QA} implies that any $\cat{Q}$-valued polymorphism for $\cat{D}$ is a $\cat{Q}$-valued  polymorphism for $\Sol(I)$.
\item This suggests the third, most general definition of $\cat{Q}$-valued constraint; it is a triple $(K,\sigma,\rho)$ consisting of an object $K\in\cat{A}$ and morphisms $\sigma\colon K\pto V$ and $\rho\colon K\pto D$ in $\cat{QA}$. We now have $\Sol(I)=\bigwedge_{(K,\sigma,\rho)\in\cat{C}}\rho\swarrow\sigma$. In the next section, we shall adopt (a finitary version of) this definition.
\end{enumerate}
In each case, the goal is to determine the value $\Opt(I)=\{!_D\}\circ\Sol(I)\colon V\pto 1$, where $1$ is the terminal object of $\cat{A}$. Notice that since $(\cat{QA})(V,1)=[\cat{A}(V,1),Q]\cong Q$, we can naturally identify $\Opt(I)$ with an element of $Q$, the ``optimal value'' of $I$.

\section{Quantaloidal CSPs in $\Rbar\FinSet$ and $\Rbar\Set$ as optimisation problems}
\label{sec:polymorphism-tropical}

In this section, 
we consider a certain class of optimisation problems which we call the \emph{tropical valued CSP} (TVCSP). The TVCSP is a subclass of the quantaloidal CSP in the quantaloid $\Rbar\Set$.  
A \defemph{TVCSP instance} $I$
consists of a \emph{finite} set $V$ of variables, a (possibly infinite) set $D$ called the domain, 
and a \emph{finite} set $\cat{C}$ of $\Rbar$-valued constraints.
Here, we define an \defemph{$\Rbar$-valued constraint} as a triple $(k,\sigma,\rho)$ consisting of a natural number $k$
and
morphisms $\sigma : [k] \pto V$
and $\rho : [k] \pto D$
in $\Rbar \Set$.
For a TVCSP instance $I = (V, D, \cat{C})$,
the morphism $\Sol(I) : V \pto D$ in $\Rbar\Set$ maps each $s : V \to D$ to
\begin{align}
\label{eqn:Sol-TVCSP}
   \Sol(I)(s) = \sup_{(k, \sigma, \rho) \in \cat{C}} \sup_{\mathbf{x} \in V^k} \left(\rho(s(\mathbf{x})) - \sigma(\mathbf{x})\right),
\end{align}
where $s(\mathbf{x})$ denotes the composite $s\circ\mathbf{x}\colon [k]\to D$.
To solve the TVCSP instance $I$ is to compute
\begin{align*}
    \Opt(I) = \inf_{s : V \to D} \Sol(I)(s).
\end{align*}

Thus the TVCSP is a problem of computing a minimax value, and it can model scenarios in which we wish to ``minimise the maximum loss'' or ``optimise the worst case''.

\begin{example}
Consider a scheduling problem, in which we are given multiple \emph{activities} $1,\dots,n$, \emph{precedence relations} among the activities of the form ``activity $j$ cannot start until activity $i$ finishes'', 
the \emph{processing time} $p_i\in\mathbb{N}$ of each activity $i$ (so that if activity $i$ starts at time $s(i)\in\mathbb{N}$, then it finishes at time $s(i)+p_i\in\mathbb{N}$) and the \emph{due date} $d_i\in\mathbb{N}$ of each activity $i$.
We are interested in a schedule of the activities (a function $s\colon [n]\to \mathbb{N}$) that minimises the maximum deviation from due dates ($\max_{i\in[n]}|d_i-(s(i)+p_i)|$). 
We can model this as a TVCSP instance by setting $V=[n]$ and $D=\mathbb{N}$ (or $D=[N]$ for a suitably large $N\in\mathbb{N}$), expressing the maximum deviation by an $\Rbar$-valued relation, and encoding the precedence relations (and the processing time) using $\Rbar$-valued constraints taking values in $\{0,\infty\}$. 
\end{example}

We explain our choice of the name ``tropical valued CSP''.
The \emph{valued CSP} (VCSP) 
is a well-known optimisation variant of the CSP (see, e.g.,~\cite{Ziv12}).
The data of a VCSP instance $I=(V,D,\cat{C})$ is similar to that of a TVCSP instance, 
and the goal is to compute the infimum of
\begin{equation}
\label{eqn:VCSP-objective}
\sum_{(k,\sigma,\rho)\in\cat{C}}
\sum_{\mathbf{x} \in V^k}\sigma(\mathbf{x}) \cdot  \rho(s(\mathbf{x}))
\end{equation}
over $s : V \to D$.
(Precisely, we have to assume, e.g., that $0 \leq \sigma(\mathbf{x}) < \infty$ for each $\mathbf{x} \in V^k$ in the VCSP.)
We can regard \eqref{eqn:Sol-TVCSP} as a variant of \eqref{eqn:VCSP-objective}, in which \emph{addition} is replaced by \emph{supremum} and \emph{multiplication} by \emph{subtraction}. This is analogous to the transition from the field $\mathbb{R}$ of real numbers to the quantale $\Rbar$, which may be thought of as a variant of the \emph{tropical semiring} (see, e.g., \cite{Speyer-Sturmfels}); roughly, the latter is obtained from the former by replacing \emph{addition} by \emph{infimum} and \emph{multiplication} by \emph{addition}.\footnote{It is known that computational complexity of VCSPs can be captured by the notion of \emph{weighted polymorphism}~\cite{CCC13}. 
Weighted polymorphisms differ substantially from our $\Rbar$-valued polymorphisms, and we have not been able to understand the former from a categorical perspective.}

\subsection{The dichotomy theorem for TVCSPs with finite domains}
\label{subsec:TVCSP-FinSet}
In this subsection,
we consider TVCSPs in which the domains $D$ are also finite.
Thus we may think of our problems as a subclass of the quantaloidal CSP in the quantaloid $\Rbar \FinSet$.

We shall classify TVCSPs in terms of computational complexity.
In order to do so rigorously,
we specify a representation of instances as follows.
We assume that $\sigma$ and $\rho$ in each $\Rbar$-valued constraint $(k,\sigma,\rho)$ take values in $\mathbb{Q}\cup\{\pm\infty\}$.
Furthermore, $(k, \sigma, \rho)$
is given by the lists of all pairs $(\mathbf{x}, \sigma(\mathbf{x}))$ for $\mathbf{x} \in \dom \sigma$
and of all pairs $(\mathbf{d}, \rho(\mathbf{d}))$ for $\mathbf{d} \in \dom \rho$,
where
$\dom \sigma = \{\, \mathbf{x} \in V^k \mid \sigma(\mathbf{x}) < \infty \,\}$
and $\dom \rho = \{\, \mathbf{d} \in D^k \mid \rho(\mathbf{d}) < \infty \,\}$.
Hence the input size of an instance $I = (V, D, \cat{C})$
is $O(|V| + |D| + \sum_{(k, \sigma, \rho) \in \cat{C}} (|\dom \sigma| + |\dom \rho|))$.
Let $\cat{D}$ be a finite set of $\Rbar$-valued relations on a finite set $D$.
$\TVCSP(\cat{D})$ denotes the class of all TVCSP instances $I = (V, D', \cat{C})$
such that $D' = D$ and, for each $\Rbar$-valued constraint $(k, \sigma, \rho) \in \cat{C}$, we have $\rho \in \cat{D}$.

For a $k$-ary $\Rbar$-valued relation $\rho$ and $\alpha\in \Rbar$ with $\alpha < \infty$, 
we denote by ${\rho}^{\alpha}$ the sublevel set of $\rho$ with respect to $\alpha$, i.e.,
${\rho}^\alpha = \{\, \mathbf{d} \in D^k \mid \alpha\geq\rho(\mathbf{d}) \,\}$.
We define $U\cat{D} = \{\, {\rho}^\alpha \mid  \rho \in \cat{D},\, \alpha < \infty \,\}$.

\begin{proposition}
\label{prop:dichotomy-TVCSP}
Let $D$ be a finite set and $\cat{D}$ a finite set of $\Rbar$-valued relations on $D$.
Then $\TVCSP(\cat{D})$ and $\CSP(U\cat{D})$ are polynomial-time reducible to each other.
\end{proposition}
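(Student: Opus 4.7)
The plan is to exhibit polynomial-time Turing reductions in both directions, each built around the key equivalence that for $\mathbf{x} \in \dom \sigma$ and $\alpha \in \Rbar$,
\[
\rho(s(\mathbf{x})) - \sigma(\mathbf{x}) \leq \alpha \iff s(\mathbf{x}) \in \rho^{\sigma(\mathbf{x}) + \alpha},
\]
while for $\mathbf{x} \notin \dom \sigma$ the left-hand side is $-\infty$, hence trivially bounded. Throughout, inequalities are understood in the usual order on $\mathbb{R}\cup\{\pm\infty\}$, and arithmetic follows \cref{table:Rbar}.

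For $\TVCSP(\cat{D}) \to \CSP(U\cat{D})$, given $I = (V, D, \cat{C})$ I would first identify a polynomial-size set of candidate values for $\Opt(I)$. Define
\[
A = \{\, \rho(\mathbf{d}) - \sigma(\mathbf{x}) \mid (k,\sigma,\rho) \in \cat{C},\, \mathbf{x} \in \dom \sigma,\, \mathbf{d} \in \dom \rho \,\},
\]
whose cardinality is bounded by the input size. For an optimal $s^*$, $\Sol(I)(s^*) = \Opt(I)$ is a supremum over finitely many terms, and when finite it is attained by some triple $((k,\sigma,\rho), \mathbf{x})$ forcing $\mathbf{x} \in \dom \sigma$ and $s^*(\mathbf{x}) \in \dom \rho$; hence $\Opt(I) \in A \cup \{-\infty, \infty\}$. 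For each $\alpha \in A \cup \{-\infty\}$, build the CSP instance $I_\alpha$ with variables $V$, domain $D$, and one constraint $(k, \mathbf{x}, \rho^{\sigma(\mathbf{x}) + \alpha})$ per pair $((k,\sigma,\rho) \in \cat{C},\, \mathbf{x} \in \dom \sigma)$; since $\sigma(\mathbf{x}) + \alpha < \infty$, each such relation belongs to $U\cat{D}$, so $I_\alpha \in \CSP(U\cat{D})$. The key equivalence gives that $I_\alpha$ is satisfiable iff $\Opt(I) \leq \alpha$, so querying the CSP oracle on each $I_\alpha$ and returning the smallest $\alpha$ for which $I_\alpha$ is satisfiable (or $\infty$ if none is) computes $\Opt(I)$ in polynomial time.

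For $\CSP(U\cat{D}) \to \TVCSP(\cat{D})$, each constraint of a given $J = (V, D, \cat{C}_J)$ is of the form $(k, \mathbf{x}, \rho^\alpha)$ for some $\rho \in \cat{D}$ and $\alpha < \infty$. Construct $I = (V, D, \cat{C})$ whose $\Rbar$-valued constraints are $(k, \{\mathbf{x}\}^\alpha, \rho)$ in the notation of \cref{ex:QA}: here $\{\mathbf{x}\}^\alpha$ assigns $\alpha$ to $\mathbf{x}$ and $\infty$ to every other tuple in $V^k$. The key equivalence then yields $\Sol(I)(s) \leq 0$ iff $s(\mathbf{x}) \in \rho^\alpha$ for each original constraint, so $J$ is satisfiable iff $\Opt(I) \leq 0$, which a single call to the TVCSP oracle decides.

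The main subtlety I anticipate is verifying that $A \cup \{\pm\infty\}$ exhausts the possible values of $\Opt(I)$, together with correct handling of the infinite cases: $\Opt(I) = \infty$ corresponds to no $\alpha$ making $I_\alpha$ satisfiable, while $\Opt(I) = -\infty$ is captured by $\alpha = -\infty$ (noting that $\rho^{-\infty} \in U\cat{D}$ since $-\infty < \infty$, and that $\sigma(\mathbf{x}) + (-\infty) = -\infty$ for finite $\sigma(\mathbf{x})$ by \cref{table:Rbar}). Apart from these bookkeeping points, no deep obstacle is expected.
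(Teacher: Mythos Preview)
Your proposal is correct and follows essentially the same route as the paper's proof: the same key equivalence, the same candidate set $A$ for $\Opt(I)$, the same level-set instance $I_\alpha$, and the same reverse reduction via the singleton morphism $\{\mathbf{x}\}^\alpha$. The only small refinement the paper makes explicit is to iterate over $\alpha \in (A \setminus \{\infty\}) \cup \{-\infty\}$ rather than $A \cup \{-\infty\}$, since $\infty$ can lie in $A$ (e.g.\ when some $\sigma(\mathbf{x}) = -\infty$ with $\rho(\mathbf{d})$ finite), and for such $\alpha$ your claim ``$\sigma(\mathbf{x}) + \alpha < \infty$'' fails and $\rho^{\infty} \notin U\cat{D}$.
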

\begin{proof}
We first give a polynomial-time reduction from $\TVCSP(\cat{D})$ to $\CSP(U\cat{D})$.
Take an arbitrary TVCSP instance $I = (V, D, \cat{C}) \in \TVCSP(\cat{D})$.
For any $\alpha < \infty$,
we obtain
\begin{align*}
    \alpha\geq\Opt(I) &\iff \exists s : V \to D.~\forall (k, \sigma, \rho) \in \cat{C}.~\forall \mathbf{x} \in V^k.~\alpha \geq \rho(s(\mathbf{x})) - \sigma(\mathbf{x})\\
    &\iff \exists s : V \to D.~\forall (k, \sigma, \rho) \in \cat{C}.~\forall \mathbf{x} \in V^k.~\sigma(\mathbf{x}) + \alpha \geq \rho(s(\mathbf{x}))\\
    &\iff \exists s : V \to D.~\forall (k, \sigma, \rho) \in \cat{C}.~\forall \mathbf{x} \in \dom \sigma.~\sigma(\mathbf{x}) + \alpha \geq \rho(s(\mathbf{x}))\\
    &\iff \exists s : V \to D.~\forall (k, \sigma, \rho) \in \cat{C}.~\forall \mathbf{x} \in \dom \sigma.~s(\mathbf{x}) \in \rho^{\sigma(\mathbf{x}) + \alpha}.
\end{align*}
Define
the CSP instance $I^\alpha = (V, D, \cat{C}^\alpha)$
by
\begin{align*}
    \cat{C}^\alpha = \{\, (k, \mathbf{x}, {\rho}^{\sigma(\mathbf{x}) + \alpha}) \mid (k, \sigma, \rho) \in \cat{C},\, \mathbf{x} \in \dom \sigma \,\}.
\end{align*}
Then we have
\begin{align}
\label{eq:Opt iff Sol}
\alpha\geq\Opt(I) \iff \Sol(I^\alpha) \neq \emptyset.
\end{align}
Note that,
since $\alpha < \infty$,
$I^\alpha \in \CSP(U\cat{D})$ and the input size of $I^\alpha$ is bounded by a polynomial in that of $I$.
(Indeed, such a bound is provided by $|V| + |D| + \sum_{(k, \sigma, \rho) \in \cat{C}} |\dom \sigma| |\dom \rho|$.)
The relation~\eqref{eq:Opt iff Sol}
says that
the computation of $\Opt(I)$ can be reduced to the problem of finding the minimum $\alpha$ such that $\Sol(I^\alpha) \neq \emptyset$.
Since
\begin{align*}
    \Opt(I)
    \in \{\, \rho(\mathbf{d}) - \sigma(\mathbf{x}) \mid (k, \sigma, \rho) \in \cat{C},\, \mathbf{d} \in \dom \rho,\, \mathbf{x} \in \dom \sigma \,\} \cup \{\pm\infty\},
\end{align*}
it suffices to solve the CSP instance $I^\alpha$ only for
\begin{align*}
    \alpha \in \left(\{\, \rho(\mathbf{d}) - \sigma(\mathbf{x}) \mid (k, \sigma, \rho) \in \cat{C},\, \mathbf{d} \in \dom \rho,\, \mathbf{x} \in \dom \sigma \,\} \setminus \{\infty\}\right) \cup \{-\infty\}.
\end{align*}
Since $|\{\, \rho(\mathbf{d}) - \sigma(\mathbf{x}) \mid (k, \sigma, \rho) \in \cat{C}, \mathbf{d} \in \dom \rho, \mathbf{x} \in \dom \sigma \,\}| \leq \sum_{(k, \sigma, \rho) \in \cat{C}} |\dom \sigma| |\dom \rho|$, 
we can compute $\Opt(I)$ by solving polynomially many instances in $\CSP(U \cat{C})$.
Thus $\TVCSP(\cat{D})$ is polynomial-time reducible to $\CSP(U\cat{D})$.

We then give a polynomial-time reduction from $\CSP(U\cat{D})$ to $\TVCSP(\cat{D})$.
For $\alpha < \infty$ and $\mathbf{x} \in V^k$,
recall the morphism $\{\mathbf{x}\}^\alpha : [k] \pto V$ in $\Rbar\FinSet$
defined in \cref{ex:QA} as
\begin{align*}
    \{\mathbf{x}\}^\alpha(\mathbf{x}') =
    \begin{cases}
    \alpha & \text{if $\mathbf{x}' = \mathbf{x}$},\\
    \infty & \text{otherwise}.
    \end{cases}
\end{align*}
From a CSP instance $I_0 = (V, D, \cat{C}_0) \in \CSP(U\cat{D})$,
we construct a TVCSP instance $I = (V, D, \cat{C})$ where
$
    \cat{C} = \{\, (k, \{\mathbf{x}\}^\alpha, \rho) \mid (k, \mathbf{x}, \rho^\alpha) \in \cat{C}_0 \,\}$.
(Note that for each $(k,\mathbf{x},\rho')\in\cat{C}_0$, there exist $\rho\in\cat{D}_k$ and $\alpha<\infty$ such that $\rho'=\rho^\alpha$, and we can find such a pair $(\rho,\alpha)$ in polynomial time by inspecting all $\rho \in \cat{D}_k$ and $\alpha \in \{ \rho(\mathbf{x}) \mid \mathbf{x} \in \dom \rho \}$.)
Then
\begin{align*}
    \Sol(I_0) \neq \emptyset
    &\iff \exists s : V \to D.~\forall (k, \mathbf{x}, \rho^\alpha) \in \cat{C}_0.~\alpha \geq \rho(s(\mathbf{x}))\\
    &\iff \exists s : V \to D.~\forall (k, \mathbf{x}, \rho^\alpha) \in \cat{C}_0.~0 \geq \rho(s(\mathbf{x})) - \alpha\\
    &\iff \exists s : V \to D.~\forall (k, \{\mathbf{x}\}^\alpha, \rho) \in \cat{C}.~\forall \mathbf{x}' \in \dom \{\mathbf{x}\}^\alpha.~0 \geq \rho(s(\mathbf{x}')) - \{\mathbf{x}\}^\alpha(\mathbf{x}')\\
    &\iff \exists s : V \to D.~\forall (k, \{\mathbf{x}\}^\alpha, \rho) \in \cat{C}.~\forall \mathbf{x}' \in V^k.~0 \geq \rho(s(\mathbf{x}')) - \{\mathbf{x}\}^\alpha(\mathbf{x}')\\
    &\iff 0 \geq \Opt(I).
\end{align*}
Thus we can solve $I_0$ by determining if $0 \geq \Opt(I)$.
This gives a polynomial-time reduction from $\CSP(U\cat{D})$ to $\TVCSP(\cat{D})$.
\end{proof}

Hence the classification of TVCSPs is reduced to that of CSPs. In particular, the dichotomy theorem for CSPs (\cref{thm:dichotomy-CSP}) implies the dichotomy for TVCSPs: $\TVCSP(\cat{D})$ is either in P or NP-hard.
We note that a relation analogous to that between $\TVCSP(\cat{D})$
and $\CSP(U\cat{D})$ described in \cref{prop:dichotomy-TVCSP} has already been observed in the context of the \emph{fuzzy CSP}~\cite[Section 9.4.3]{RvW06}, which can be seen as a special case of the TVCSP
{(see also \cite{HMV17})}.
However, the situation for the TVCSP is subtler due to the coexistence of $\infty$ and $-\infty$, and our proof of \cref{prop:dichotomy-TVCSP} relies heavily on the adjointness relation \eqref{eqn:adjointness-quantaloid} in $\Rbar$ as well as the details of the operations $+$ and $-$ (\cref{table:Rbar}). 

The above dichotomy for TVCSPs can be captured by a suitable notion of polymorphism.
Specialising the notion of $\cat{Q}$-valued polymorphism in \cref{sec:CSP-in-QA} to the quantaloid $\Rbar\FinSet$, we define an ($n$-ary) \defemph{$\Rbar$-valued polymorphism} of an $\Rbar$-valued relation  $\rho\colon [k]\pto A$ on a finite set $A$ to be a pair $(f,\alpha)$ of a function $f\colon A^n\to A$ and $\alpha\in \Rbar$ such that for all $(x_{ij})\in A^{n\times k}$, we have 
\begin{align*}
    \alpha + \sup\{\, \rho(x_{11},\dots, x_{1k}), \dots, \rho(x_{n1},\dots,x_{nk}) \,\} \geq \rho(f(x_{11},\dots,x_{n1}), \dots, f(x_{1k},\dots,x_{nk})).
\end{align*}
Given a set $\cat{R}$ of $\Rbar$-valued relations on $A$, we say $(f,\alpha)$ is an \defemph{$\Rbar$-valued polymorphism} of $\cat{R}$ if it is so for every element of $\cat{R}$.

\begin{lemma}
\label{lem:f-zero}
Let $A$ be a finite set and $\cat{R}$ a set of $\Rbar$-valued relations on $A$. 
For any function $f \colon A^n \to A$,
$(f,0)$ is an $\Rbar$-valued polymorphism of $\cat{R}$
if and only if
$f$ is a polymorphism of $U\cat{R}$.
\end{lemma}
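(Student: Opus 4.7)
The plan is to unfold both sides of the equivalence directly. By definition, $(f,0)$ is an $\Rbar$-valued polymorphism of $\cat{R}$ iff for every $\rho\in\cat{R}$ (say $k$-ary) and every $(x_{ij})\in A^{n\times k}$,
\[
\sup\{\,\rho(x_{11},\dots,x_{1k}),\dots,\rho(x_{n1},\dots,x_{nk})\,\}\geq \rho(f(x_{11},\dots,x_{n1}),\dots,f(x_{1k},\dots,x_{nk})),
\]
using $0+\beta=\beta$ in $\Rbar$. On the other side, $f$ is a (classical) polymorphism of $U\cat{R}$ iff for every $\rho\in\cat{R}$, every $\alpha<\infty$ and every $(x_{ij})\in A^{n\times k}$ with each row $(x_{i1},\dots,x_{ik})\in\rho^\alpha$, the column-image $(f(x_{11},\dots,x_{n1}),\dots,f(x_{1k},\dots,x_{nk}))$ lies in $\rho^\alpha$; equivalently, $\sup_i\rho(x_{i1},\dots,x_{ik})\leq\alpha$ implies $\rho(f(\dots),\dots,f(\dots))\leq\alpha$.

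For the forward implication, I assume $(f,0)$ is an $\Rbar$-valued polymorphism of $\cat{R}$, fix $\rho\in\cat{R}$ and $\alpha<\infty$, and take $(x_{ij})$ with $(x_{i1},\dots,x_{ik})\in\rho^\alpha$ for every $i$. Then $\sup_i\rho(x_{i1},\dots,x_{ik})\leq\alpha$, and the polymorphism inequality gives $\rho(f(x_{11},\dots,x_{n1}),\dots,f(x_{1k},\dots,x_{nk}))\leq\alpha$, so the image is in $\rho^\alpha$ as required.

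For the converse, I assume $f$ preserves every $\rho^\alpha\in U\cat{R}$, fix $\rho\in\cat{R}$ and $(x_{ij})$, and set $\alpha=\sup_i\rho(x_{i1},\dots,x_{ik})$. If $\alpha=\infty$ the inequality to be proved reduces to $\infty\geq\rho(\cdots)$, which holds trivially in the usual order. Otherwise $\alpha<\infty$, each row $(x_{i1},\dots,x_{ik})$ lies in $\rho^\alpha\in U\cat{R}$ by construction, so preservation of $\rho^\alpha$ by $f$ yields $\rho(f(x_{11},\dots,x_{n1}),\dots,f(x_{1k},\dots,x_{nk}))\leq\alpha$, which is exactly the required inequality.

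The argument is essentially routine; the only care needed is bookkeeping around the reversed order on $\Rbar$ (so that $\sup$ denotes usual-order supremum and $\bigvee$ in the quantale is $\inf$) and the split into the $\alpha=\infty$ and $\alpha<\infty$ cases so that $\rho^\alpha$ is genuinely an element of $U\cat{R}$. No new quantaloidal machinery is required beyond the definition of $\rho^\alpha$.
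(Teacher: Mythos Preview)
Your proof is correct and follows essentially the same approach as the paper's: the forward direction is identical, and for the converse both arguments hinge on taking $\alpha=\sup_i\rho(x_{i1},\dots,x_{ik})$, with the paper phrasing this step as a contrapositive while you do it directly.
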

\begin{proof}
We may assume that $\cat{R}$ consists of a single (say, $k$-ary) $\Rbar$-valued relation $\rho\colon [k]\pto A$.

First suppose that $(f,0)$ is an $\Rbar$-valued polymorphism of $\rho$. Our aim is to show that for every $\alpha <\infty$, $f$ is a polymorphism of ${\rho}^{\alpha}$, i.e., that for any $(x_{ij})\in A^{n\times k}$ we have
\begin{equation}
\label{eqn:rho-alpha-polymorphism}
\big(
\rho^\alpha(x_{11},\dots,x_{1k})\wedge \dots\wedge \rho^\alpha(x_{n1},\dots,x_{nk})
\big)
\implies \rho^\alpha(f(x_{11},\dots,x_{n1}),\dots,f(x_{1k},\dots,x_{nk})).
\end{equation}
Assume the antecedent of \eqref{eqn:rho-alpha-polymorphism}, i.e., that  $\alpha\geq \rho(x_{i1},\dots,x_{ik})$ for every $i\in\{1,\dots, n\}$.
Since $(f,0)$ is an $\Rbar$-valued polymorphism of $\rho$, we have 
\[
\sup\{\,\rho(x_{11},\dots,x_{1k}), \dots, \rho(x_{n1},\dots,x_{nk})\,\}\geq \rho(f(x_{11},\dots,x_{n1}),\dots,f(x_{1k},\dots,x_{nk})).
\]
Thus it follows that $\alpha\geq\rho(f(x_{11},\dots,x_{n1}),\dots,f(x_{1k},\dots,x_{nk}))$, showing the consequent of \eqref{eqn:rho-alpha-polymorphism}. 

Next suppose that $(f,0)$ is not an $\Rbar$-valued polymorphism of $\rho$. This means that there exists a tuple $(x_{ij})\in A^{n\times k}$ such that 
\[
\sup\{\,\rho(x_{11},\dots,x_{1k}), \dots, \rho(x_{n1},\dots,x_{nk})\,\}< \rho(f(x_{11},\dots,x_{n1}),\dots,f(x_{1k},\dots,x_{nk})).
\]
Let $\alpha$ be the value of the left-hand side. Then $\alpha<\infty$ and $f$ is not a polymorphism of $\rho^\alpha$; indeed, our choice of $(x_{ij})$ and $\alpha$ implies that \eqref{eqn:rho-alpha-polymorphism} is violated. 
\end{proof}

As an easy consequence of \cref{thm:dichotomy-CSP}, \cref{prop:dichotomy-TVCSP} and \cref{lem:f-zero}, we have the following criterion of computational complexity in terms of $\Rbar$-valued polymorphisms. 
{An analogous result has been obtained in \cite{HMV17}, although in a different setting.}
\begin{theorem}
\label{thm:dichotomy-TVCSP}
Let $D$ be a finite set and $\cat{D}$ a finite set of $\Rbar$-valued relations on $D$.
If $(f,0)$ is an $\Rbar$-valued polymorphism of $\cat{D}$ for some Siggers operation $f$ on $D$,
then $\TVCSP(\cat{D})$ is in P.
Otherwise, it is NP-hard.
\end{theorem}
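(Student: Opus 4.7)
The plan is to assemble the theorem directly from the three preceding results: the classical dichotomy theorem (\cref{thm:dichotomy-CSP}), the polynomial-time interreducibility between $\TVCSP(\cat{D})$ and $\CSP(U\cat{D})$ (\cref{prop:dichotomy-TVCSP}), and the bridge between $\Rbar$-valued polymorphisms of the form $(f,0)$ and ordinary polymorphisms of $U\cat{D}$ (\cref{lem:f-zero}). Essentially, the argument is a chain of three implications, and the statement is advertised as an ``easy consequence'', so I do not expect any substantive obstacle.

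First I would verify that $U\cat{D}$ is a \emph{finite} set of relations, so that \cref{thm:dichotomy-CSP} applies. Although the parameter $\alpha$ in $\rho^\alpha$ ranges over a continuum, each $\rho\in\cat{D}$ assumes only finitely many values on the finite set $D^k$, so only finitely many distinct sublevel sets $\rho^\alpha$ can arise; combined with the finiteness of $\cat{D}$ itself, this yields $|U\cat{D}|<\infty$. Thus the dichotomy theorem applies to $\CSP(U\cat{D})$ and gives: $\CSP(U\cat{D})$ is in P provided some Siggers operation on $D$ is a polymorphism of $U\cat{D}$, and is NP-complete otherwise.

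Next I would invoke \cref{lem:f-zero} to rewrite the algebraic hypothesis. A function $f\colon D^n\to D$ is a polymorphism of $U\cat{D}$ if and only if $(f,0)$ is an $\Rbar$-valued polymorphism of $\cat{D}$. Applying this equivalence to Siggers operations (which are just particular $4$-ary operations on $D$, and the equivalence is uniform in $f$) matches the hypothesis of the dichotomy for $\CSP(U\cat{D})$ with the hypothesis appearing in the statement of the theorem for $\TVCSP(\cat{D})$.

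Finally, \cref{prop:dichotomy-TVCSP} transports the complexity classification between the two problems: if $\CSP(U\cat{D})$ is in P, then so is $\TVCSP(\cat{D})$; if $\CSP(U\cat{D})$ is NP-complete, then $\TVCSP(\cat{D})$ is NP-hard (hardness, rather than completeness, because $\TVCSP(\cat{D})$ is a computation of an optimal value rather than a decision problem). Combining the two cases yields the dichotomy as stated. The only mildly delicate point is the finiteness of $U\cat{D}$, which underlies the applicability of \cref{thm:dichotomy-CSP}; everything else is bookkeeping.
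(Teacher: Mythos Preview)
Your proposal is correct and is precisely the argument the paper intends: the theorem is stated there as an immediate consequence of \cref{thm:dichotomy-CSP}, \cref{prop:dichotomy-TVCSP} and \cref{lem:f-zero}, with no further proof given. You have in fact supplied more detail than the paper does, notably the verification that $U\cat{D}$ is finite and the remark on NP-hardness versus NP-completeness.
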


\begin{remark}
Let $\rho\colon [k]\pto A$ be an $\Rbar$-valued relation on a finite set $A$. The $\Rbar$-valued polymorphisms of $\rho$ give rise to the clone on $A$ consisting of all operations $f$ on $A$ such that $(f,0)$ is an $\Rbar$-valued polymorphism of $\rho$. By \cref{lem:f-zero}, this is the clone of polymorphisms of $\{\rho^\alpha\mid\alpha <\infty\}$. 
In addition, we also have the (in general strictly larger) clone on $A$ which consists of all $f$ such that $(f,\alpha)$ is an $\Rbar$-valued polymorphism of $\rho$ for some $\alpha <\infty$. 
See~\cref{prop:Pol-clone-general}.
\end{remark}

\subsection{TVCSPs and continuous optimisation}
\label{subsec:TVCSP-Set}
Finally, we consider TVCSPs in which the domains $D$ are equal to the set $\mathbb{R}$ of real numbers. 
In this case, $\Sol(I): V \pto \mathbb{R}$ for a TVCSP instance $I=(V,\mathbb{R},\cat{C})$ amounts to a function $\mathbb{R}^V \rightarrow \Rbar$ and  
$\Opt(I)$ is the infimum of $\Sol(I)$.
Hence, it can be regarded as a continuous optimisation problem.
In continuous optimisation, \emph{convexity} of a function plays a key role in the design of efficient minimisation algorithms.
We investigate the relationship between convexity and the TVCSP in what follows.
We note that infinite numeric domains such as $D = \mathbb{R}$, $\mathbb{Q}$, and $\mathbb{Z}$ have been studied in the ordinary CSP (see, e.g., \cite{BoM17}).

A function $\rho: \mathbb{R}^k \rightarrow \Rbar$ is called \emph{quasiconvex} if $\max\{ \rho(\mathbf{x}), \rho(\mathbf{y})\} \ge \rho(\lambda \mathbf{x} + (1-\lambda)\mathbf{y})$ for all $\mathbf{x},\mathbf{y} \in \mathbb{R}^k$ and $\lambda \in [0,1]$.
In other words, a function is quasiconvex if and only if every sublevel set of it is convex.
Quasiconvex functions generalise convex functions, and their minimisation algorithms have been studied (e.g., \cite{Kiw01}).
We can capture quasiconvexity by $\Rbar$-valued polymorphisms.
For each $\lambda \in [0,1]$ we define a binary operation $f_{\lambda}: \mathbb{R}^2 \rightarrow \mathbb{R}$ as $f_\lambda(x,y) = \lambda x + (1-\lambda)y$.
It is then immediate from the definition of quasiconvexity that a function $\rho: \mathbb{R}^k \rightarrow \Rbar$ is quasiconvex if and only if $(f_\lambda,0)$ is an $\Rbar$-valued polymorphism of the corresponding $\Rbar$-valued relation $\rho\colon [k]\pto \mathbb{R}$ for all $\lambda \in [0,1]$.

Next we consider a TVCSP associated with linear $\Rbar$-valued relations, where $\rho: [k]\pto \mathbb{R}$ is \defemph{linear} if there exists $\mathbf{w}^\rho=(w^\rho_1,\dots,w^\rho_k)\in \mathbb{R}^k$ such that for all $\mathbf{d}=(d_1,\dots,d_k) \in \mathbb{R}^k$, we have  $\rho(\mathbf{d}) = \sum_{j=1}^k w^\rho_jd_j$.
Given a TVCSP instance $I=(V,\mathbb{R},\cat{C})$ such that $\rho$ is linear for all $(k,\sigma,\rho)\in\cat{C}$,  $\Sol(I)$ is the supremum of finitely many affine functions (or the constant function with the value $\infty$); hence it is a piecewise-linear convex function $\mathbb{R}^V\to \Rbar$.
Its minimisation can be reduced to linear optimisation, provided that each $\Rbar$-valued constraint $(k,\sigma, \rho)\in\cat{C}$ is given as follows: $\sigma$ takes values in $\mathbb{Q}\cup\{\pm\infty\}$ and is given by a list as in \cref{subsec:TVCSP-FinSet}, and $\rho$ is specified by a list $\mathbf{w}^\rho\in\mathbb{Q}^k$. 
Here, \emph{linear optimisation} (also known as \emph{linear program}) is the problem of minimising a linear function subject to a system of linear inequalities, 
and is one of the central problems in mathematical optimisation (e.g., \cite{Van20}).
We can see that $\Opt(I)$ is equal to the value of the following optimisation problem.
\begin{align*}
\begin{array}{lll}
\text{minimise} & \alpha &\\
\text{subject to} & \alpha\geq \sum_{j=1}^k w^\rho_js(x_j)-\sigma(x_1, \dots, x_k) & ((k, \sigma, \rho) \in \cat{C},\, (x_1, \dots, x_k) \in \dom \sigma), 
\end{array}
\end{align*}
where the variables are $s: V \rightarrow \mathbb{R}$ and $\alpha \in \Rbar$.
This is an instance of linear optimisation. Accordingly, it is solvable in polynomial time by a suitable linear optimisation algorithm (e.g., \cite{Sch98}).

\bibliographystyle{eptcs}
\bibliography{generic}

\end{document}